\newtheorem{theorem}{Theorem}[section]
\newtheorem{lemma}[theorem]{Lemma}
\newtheorem{proposition}[theorem]{Proposition}
\newtheorem{corollary}[theorem]{Corollary}
\theoremstyle{definition}
\newtheorem{remark}[theorem]{Remark}
\newcommand{\R}{{\mathbb R}}
\title{Nonlinear Inequalities with Double Riesz Potentials}
\author{Marius Ghergu}
\address{School of Mathematics and Statistics, University College Dublin, Belfield, Dublin 4, Ireland}
\address{Institute of Mathematics Simion Stoilow of the Romanian Academy, 21 Calea Grivitei St., 010702 Bucharest, Romania}
\email{marius.ghergu@ucd.ie}
\author{Zeng Liu}
\thanks{ZL was supported by NSFC Grant Numbers 11901418, 11771319.}
\address{Department of Mathematics, Suzhou University of Science and Technology, Suzhou, 215009, P.R. China}
\email{zliu@mail.usts.edu.cn}
\author{Yasuhito Miyamoto}
\thanks{YM was supported by JSPS KAKENHI Grant Numbers 19H01797, 19H05599.}
\address{Graduate School of Mathematical Sciences, The University of Tokyo, 3-8-1 Komaba, Meguro-ku, Tokyo 153-8914, Japan}
\email{miyamoto@ms.u-tokyo.ac.jp}
\author{Vitaly Moroz}
\address{Mathematics Department, Swansea University,
		Bay Campus, Fabian Way, Swansea SA1 8EN, Wales, United Kingdom}
\email{v.moroz@swansea.ac.uk}
\date{\today}
\begin{document}

\subjclass[2020]{Primary 45G10; Secondary 31B10, 45M05}

\date{}

\keywords{Nonlinear integral inequalities; Riesz potentials;
nonlocal positivity principle; Liouville theorems}

\begin{abstract}
We investigate the nonnegative solutions to the nonlinear integral inequality
$u \ge I_{\alpha}\ast\big((I_\beta \ast u^p)u^q\big)$ a.e. in $\R^N$,
where $\alpha, \beta\in (0,N)$, $p, q>0$ and $I_\alpha,I_\beta$ denote the Riesz potentials of  order $\alpha$ and $\beta$ respectively. Our approach relies on a nonlocal positivity principle which allows us to derive optimal ranges for the parameters $\alpha$, $\beta$, $p$ and $q$ to describe the existence and the nonexistence of a solution. The optimal decay at infinity for such solutions is also discussed.
\end{abstract}

\maketitle

	\section{Introduction}

	We study nonnegative solutions of the following integral inequality with double Riesz potentials
	\begin{equation}\label{eqC}
		u \ge I_{\alpha}\ast\big((I_\beta \ast u^p)u^q\big)\quad\text{a.e.~in}\quad\R^N,
	\end{equation}
in the range $N\ge 2$, $p,q>0$, $\alpha,\beta\in(0,N)$ and $I_\gamma(x):=A_\gamma|x|^{-(N-\gamma)}$ is the Riesz potential of order $\gamma\in(0,N)$ where $*$ denotes the standard convolution in $\R^N$. The choice of the normalisation constant $A_\gamma:=\frac{\Gamma((N-\gamma)/2)}{\pi^{N/2}2^{\gamma}\Gamma(\gamma/2)}$ 	ensures that $I_\gamma(x)$ could be interpreted as the Green function of $(-\Delta)^{\gamma/2}$ in $\R^N$, and that the semigroup property $I_{\alpha+\beta}=I_\alpha*I_\beta$ holds for all $\alpha,\beta\in(0,N)$ such that $\alpha+\beta<N$, see for example \cite{Landkof}*{p.\thinspace{}45}.

By a nonnegative {\em solution} of \eqref{eqC} we understand a function $u\in L^1_{loc}(\R^N)$, $u\geq 0$, such that the right hand side of \eqref{eqC} is well-defined, i.e.
 \begin{equation}\label{eqCdom}
 I_{\alpha}\ast\big((I_\beta \ast u^p)u^q\big)<+\infty\quad\text{a.e.~in}\quad\R^N
 \end{equation}
 and the inequality \eqref{eqC} holds a.e.~in $\R^N$. Condition \eqref{eqCdom} above is equivalent (see Lemma \ref{leq} below) to
\begin{equation}\label{cd}
(I_\beta \ast u^p)u^q \in L^1\big((1+|x|)^{-(N-\alpha)}dx,\R^N\big).
\end{equation} 	

Integral inequalities and equations featuring  a single Riesz potential have been extensively investigated in the past two decades. The prototype model
$$
u= I_\alpha\ast u^p \quad\text{ in }\quad\R^N,
$$
has been largely investigated starting with the seminal works in \cite{CLO2005,CLO2006}. More recently, various techniques have been devised to deal with systems of equations or inequalities that incorporate anisotropic or more general potentials \cite{CDM2008, GT2015, Lei2013, V2015, V2016}.

Our aim in this paper is to provide an optimal description for the existence and nonexistence of positive solutions to the integral inequality \eqref{eqC}.

For any $\alpha>0$, the fractional Laplacian $(-\Delta)^{\alpha/2}$ is defined by means of the Fourier transform
$$\widehat{(-\Delta)^{\alpha/2} u}(\xi):=|\xi|^\alpha\hat{u}(\xi)$$
for all $u\in\mathcal{S}'$ such that $|\xi|^\alpha u\in\mathcal{S}'$, here
$\mathcal{S}'$ stands for the space of tempered distributions on $\R^N$ which is the dual of the Schwartz space $\mathcal{S}$.

Since for $\alpha\in(0,N)$ the Riesz potential $I_{\alpha}$ can be interpreted as the inverse of $(-\Delta)^{\alpha/2}$  (cf. \cite{Stein}*{Sect.5.1} or \cite[Section 2.1]{CDM2008}), under some extra integrability conditions on $u\ge 0$, inequality \eqref{eqC} is equivalent to the elliptic inequality
	\begin{equation}\label{eqm}
		(-\Delta)^{\alpha/2} u \ge (I_\beta \ast u^p)u^q\quad\text{a.e. in $\R^N$},
	\end{equation}
	provided that both \eqref{eqC} and \eqref{eqm} are well-defined.
	This is the case, for instance, if  \eqref{cd} holds and $u$ belongs to the
	 homogeneous Sobolev space $\dot{H}^{\alpha/2}(\R^N)$, so that \eqref{eqm} is understood in the weak sense. Pointwise interpretations of the inequality \eqref{eqm} for non-integer $\alpha/2$ are also possible, cf. \cite[Theorem 2.13]{CDM2008}. For a comparison of different definitions of the higher order fractional Laplacian $(-\Delta)^{\alpha/2}$ see \cite{AJS}.

Inequality \eqref{eqm} is a Choquard type inequality. Equations and inequalities of such structure originate from mathematical physics and have attracted considerable interest of mathematicians in the past decade, see \cite{MVS-survey} for a survey. In the 2nd order elliptic case $\alpha=2$ optimal regimes for the existence and nonexistence of positive solutions to inequality \eqref{eqm} were fully investigated in \cite{MVS-JDE}.
The higher--order polyharmonic case $\alpha/2=m\in\mathbb N$ was recently studied in \cite{GMM}, where (amongst other results) optimal existence and nonexistence regimes for the equation \eqref{eqm} were obtained for the exponents $p\ge 1$ and $q>1$, see \cite[Theorem 1.4]{GMM}.

	In this work we extend earlier results in \cite{MVS-JDE} and \cite{GMM} to the full admissible range $\alpha\in(0,N)$ and exponents $p,q>0$. Our approach is different from the techniques in \cite{GMM}, which were based on the poly--superharmonic properties of $(-\Delta)^m$ in the elliptic framework of equation \eqref{eqm}. Instead, we work entirely with the double--nonlocal inequality \eqref{eqC}. Our analysis of \eqref{eqC} employs only direct Riesz kernel estimates, and a new version of the nonlocal positivity principle in Lemma \ref{lV}, inspired by \cite[Proposition 3.2]{MVS-JDE}. This has the advantage of incorporating the fractional case of noninteger $\alpha/2$ in a seemingly effortless way, and does not rely on comparison principles or Harnack type inequalities, which are commonly used for similar Liouville type results in the elliptic framework, but which are generally speaking not available in the case of the higher--order fractional Laplacians $(-\Delta)^{\alpha/2}$ with $\alpha>2$.
 \smallskip

The main result of this work related to the existence of positive solutions to \eqref{eqC} reads as follows.

 \begin{theorem}\label{t1}
 	Let $p,q>0$. Then, inequality \eqref{eqC} has a nontrivial nonnegative solution $u\in L^1_{loc}(\R^N)$ which satisfies \eqref{eqCdom} if and only if
$$
\begin{cases}
\displaystyle p>\frac{\beta}{N-\alpha}&\\[0.1in]
\displaystyle  p+q>\frac{N+\beta}{N-\alpha} &\\[0.1in]
\displaystyle  q>\frac{\beta}{N-\alpha}&\quad\mbox{ if }\beta>N-\alpha,\\[0.1in]
\displaystyle  q\geq 1 & \quad \mbox{ if }\beta=N-\alpha,\\[0.1in]
\displaystyle  q>1-\frac{N-\alpha-\beta}{N}p&\quad\mbox{ if }\beta<N-\alpha.
\end{cases} 	
$$
 \end{theorem}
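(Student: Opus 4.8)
The plan is to prove the two implications separately: a necessity (Liouville) direction and a sufficiency (construction) direction. The guiding heuristic throughout is the scaling $u_\lambda(x)=\lambda^{-\tau}u(x/\lambda)$ together with the homogeneity of the Riesz kernels: since $I_\gamma$ is homogeneous of degree $-(N-\gamma)$, the operator $u\mapsto I_\alpha\ast((I_\beta\ast u^p)u^q)$ maps degree $-\tau$ to degree $-\tau(p+q)+(N-\alpha)+(N-\beta)$, and the inequality \eqref{eqC} is scaling-critical when these two degrees agree, i.e.\ at $\tau=\tau_*:=\frac{2N-\alpha-\beta}{p+q-1}$. The three ``generic'' conditions $p>\frac{\beta}{N-\alpha}$, $q>\frac{\beta}{N-\alpha}$ (resp.\ $q\ge 1$, resp.\ the tilted line when $\beta<N-\alpha$) and $p+q>\frac{N+\beta}{N-\alpha}$ should correspond respectively to: local integrability of the inner potential $I_\beta\ast u^p$ near a point where $u$ is bounded below, local integrability issues for $u^q$ against the inner potential, and the requirement $\tau_*<N-\alpha$ so that the self-improved decay is consistent. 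I would first record these scaling computations explicitly, as they organise which obstruction is binding in each regime.

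For \emph{necessity}, I would argue by contradiction using the nonlocal positivity principle (Lemma~\ref{lV}, the announced analogue of \cite[Proposition 3.2]{MVS-JDE}). The standard scheme: suppose $u\not\equiv 0$ solves \eqref{eqC}. By the assumed positivity principle, iterating \eqref{eqC} forces $u$ to be bounded below on large balls — more precisely, there is $c>0$ and $R_0$ with $u\ge c$ on, say, $B_{R_0}$, or a suitable averaged lower bound $\fint_{B_R}u\ge cR^{-\tau_*}$ type estimate valid as the mechanism kicks in. Feeding such a lower bound back into the right-hand side, each convolution either produces a divergent integral (contradiction with \eqref{eqCdom}, equivalently \eqref{cd}) or an improved lower bound that, bootstrapped, eventually diverges. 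Concretely: if $p\le\frac{\beta}{N-\alpha}$ then $I_\beta\ast u^p$ fails to be finite once $u$ has been shown bounded below on all balls, and dually for the conditions on $q$; while the failure of $p+q>\frac{N+\beta}{N-\alpha}$ (and the tilted variants) is extracted by optimising the two-step convolution of a power lower bound and contradicting \eqref{cd}. The finitely many boundary cases $\beta=N-\alpha$, $q=1$, and the endpoint of the tilted line need a separate borderline argument, typically exploiting that the marginal convolution picks up a logarithmic factor which defeats the integrability condition \eqref{cd}; I expect this to be a source of fiddly but not deep case distinctions.

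For \emph{sufficiency}, the plan is to exhibit an explicit supersolution of the form $u(x)=c(1+|x|)^{-\delta}$ and verify \eqref{eqC} directly by Riesz-kernel estimates. The exponent $\delta$ should be chosen in the window dictated by scaling — roughly $\delta$ slightly below $\min\{N-\alpha,\ \tau_*\}$ when $\beta<N-\alpha$, and the analogous threshold $N-\alpha$ (or just below) in the other two regimes — and then one estimates $I_\beta\ast u^p\lesssim(1+|x|)^{-\delta p+\beta}$ (valid precisely when $\delta p>\beta$, equivalently $\delta>\beta/p$, which is compatible with the allowed window exactly under $p>\frac\beta{N-\alpha}$ type conditions), then $(I_\beta\ast u^p)u^q\lesssim(1+|x|)^{-\delta(p+q)+\beta}$, and finally $I_\alpha\ast[\,\cdot\,]\lesssim(1+|x|)^{-\delta(p+q)+\alpha+\beta-(2N-\alpha-\beta)\cdot 0}$; matching against $u\simeq(1+|x|)^{-\delta}$ reduces to the inequality $\delta(p+q)-\alpha-\beta\ge\delta$ together with the kernel decay-rate constraints $0<\delta(p+q)-\beta<N$ and $\delta<N-\alpha$. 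Checking that the system of parameter conditions in the theorem is \emph{exactly} what makes this window of admissible $\delta$ nonempty — and in particular verifying that at $\beta=N-\alpha$ one must take $q\ge 1$ and at $\beta<N-\alpha$ the tilted line is sharp — is, I expect, the main bookkeeping obstacle. The necessary auxiliary kernel lemma here is the elementary estimate $I_\gamma\ast(1+|\cdot|)^{-s}\simeq(1+|x|)^{-(s-\gamma)}$ for $\gamma<s<N$, with logarithmic corrections at the endpoints $s=\gamma$, $s=N$, which I would state and use freely. The genuinely delicate point across the whole argument is handling the regime $\beta<N-\alpha$, where the inner potential $I_\beta\ast u^p$ can decay \emph{slower} than $u$ itself, so the self-consistency constraint couples $p$ and $q$ linearly (hence the tilted line $q>1-\frac{N-\alpha-\beta}{N}p$) rather than just through $p+q$; getting the direction of the inequality and the sharp slope right in both the Liouville and the construction halves is where I would spend most of the effort.
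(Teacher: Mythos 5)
Your sufficiency half is essentially the paper's route: explicit radial supersolutions $u=c(1+|x|)^{-\delta}$ verified by the two-step kernel estimates, with $\delta$ chosen in the window you describe. The only caution there is the borderline cases $q=\frac{\beta}{N-\alpha}<1$ and $(\beta,q)=(N-\alpha,1)$, which lie inside the existence region when $p>\frac{N}{N-\alpha}$ and where the naively "matched" power fails; one must take $\delta$ strictly below $N-\alpha$ (or insert a logarithmic factor, as the paper does to additionally track optimal decay). That is bookkeeping, as you anticipate.

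The necessity half has a genuine gap. You describe the positivity principle as a device producing pointwise lower bounds which are then bootstrapped through the double convolution until something diverges. This mechanism cannot cover the stated nonexistence range. First, the iteration of the kernel lower bound $u\gtrsim|x|^{-(N-\alpha)}$ saturates: once $(I_\beta*u^p)u^q$ decays faster than $|x|^{-N}$ (which happens exactly when $q\ge\frac{\beta}{N-\alpha}$), convolving with $I_\alpha$ returns only $|x|^{-(N-\alpha)}$ again, so no improvement and no divergence is produced in the regime $p,q>\frac{\beta}{N-\alpha}$ with $1\le p+q\le\frac{N+\beta}{N-\alpha}$ --- a nonempty regime since $\frac{2\beta}{N-\alpha}<\frac{N+\beta}{N-\alpha}$, and the one where the condition on $p+q$ is the binding obstruction. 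Second, for $p+q<1$ the map is sublinear and the iteration contracts toward a finite profile rather than blowing up. Third, the "dual" integrability argument for $q$ (pairing $u\gtrsim|x|^{-(N-\alpha)}$ and $I_\beta*u^p\gtrsim|x|^{-(N-\beta)}$ against the weight $(1+|x|)^{-(N-\alpha)}$) only rules out $q\le\frac{\beta}{N-\alpha}-1$, one full unit short of the stated threshold $q\le\frac{\beta}{N-\alpha}$. What actually closes all of these cases in the paper is the quantitative form of the positivity principle: testing \eqref{eqV} with $\varphi^2/u$ and symmetrizing yields $\int\varphi^2\ge CR^{\alpha-N}\bigl(\int\sqrt{V}\varphi\bigr)^2$ with $V=(I_\beta*u^p)u^{q-1}$, hence the annulus estimate
\begin{equation*}
\Bigl(\int_{B_{2R}}u^p\Bigr)\Bigl(\int_{B_{2R}\setminus B_R}u^{\frac{q-1}{2}}\Bigr)^2\le CR^{3N-\alpha-\beta},
\end{equation*}
which is then interpolated against $|B_{2R}\setminus B_R|\sim R^N$ by H\"older and against the lower bound $u\gtrsim|x|^{-(N-\alpha)}$ by Cauchy--Schwarz (plus a separate logarithmic/symmetrization argument on the critical line $p+q=\frac{N+\beta}{N-\alpha}$). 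This capacity-type inequality, not a pointwise bootstrap, is the missing idea; without it your necessity argument does not go through.
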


The necessary part of the proof follows directly from Propositions \ref{p1},  \ref{p5}-\ref{p9} below.
The sufficiency follows from Propositions \ref{sharp-B1}-\ref{sharp-B1plusplus}, 
where we construct explicitly smooth positive radial solutions to \eqref{eqC}.
In the case $\alpha=2$ our results are fully consistent with the results established in \cite[Theorem 1]{MVS-JDE} for the 2nd order elliptic inequality \eqref{eqm}.
The nonexistence of positive solutions to double--nonlocal inequality \eqref{eqC} with $p>1$, $q>0$ and $p+q\le\frac{N+\beta}{N-\alpha}$ was established by different methods in \cite[Theorem 1]{Le}.

\begin{remark}\label{remark01}
In Section \ref{odec} we also discuss the optimal decay of solutions to \eqref{eqC} in terms of the parameters $\alpha$, $\beta$, $p$ and $q$.
Clearly (see \eqref{Rlow}), if $u\ge 0$ is a nontrivial solution of \eqref{eqC}  then
$\liminf_{|x| \to \infty} u (x) |x|^{N - \alpha}>0$. In particular, for $R\gg 1$ we have
\begin{equation}\label{2lowint+}
	\int_{B_{2R} \setminus B_R}u\ge cR^{\alpha}.
\end{equation}
In Proposition \ref{p10} we establish an integral lower bound
\begin{equation}\label{qlowint+}
	\int_{B_{2R} \setminus B_R}u\ge cR^{\frac{\alpha+\beta-Nq}{1-q}},
\end{equation}
which is stronger than \eqref{2lowint+} when  $\alpha+\beta<N$ and $q < \frac{\beta}{N - \alpha}<1$.
In Propositions \ref{sharp-B1} and \ref{sharp-B2} we construct positive radial solutions $u$ to \eqref{eqC} that confirm the optimality of \eqref{2lowint+} when  $q > \frac{\beta}{N - \alpha}$ and of \eqref{qlowint+} when  $q<\frac{\beta}{N-\alpha}$. When $q=\frac{\beta}{N-\alpha}$ the bounds in \eqref{2lowint+} and \eqref{qlowint+} coincide. In that case in Propositions \ref{sharp-B1plus} and \ref{sharp-B1plusplus} we construct positive radial solutions to \eqref{eqC} that satisfy \eqref{2lowint+} up to a $\log$ ($q<1$) or arbitrary small polynomial ($q=1$) corrections.
In the case $\alpha=2$ such corrections are necessary, see \cite[Proposition 4.12, 4.13]{MVS-JDE}.
\end{remark}

\section{Preliminaries}

In this section we collect some useful facts for our approach.

\begin{lemma}\label{leq}
Let  $f:\R^N\to\R$ be a nonnegative measurable function. Then, the Riesz potential $I_\alpha*f$ of order $\alpha\in(0,N)$ is well defined, in the sense that
\begin{equation}\label{Rwell}
I_\alpha*f<+\infty \quad \text{a.e. in $\R^N$,}
\end{equation}
if and only if
\begin{equation}\label{Rup}
f\in L^1\big((1+|x|)^{-(N-\alpha)}dx,\R^N\big).
\end{equation}
\end{lemma}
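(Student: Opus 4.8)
The plan is to prove the equivalence by splitting $\mathbb{R}^N$ into a unit ball $B_1$ and its complement, and comparing the kernel $|x-y|^{-(N-\alpha)}$ with $(1+|y|)^{-(N-\alpha)}$ on each region. First I would note that $I_\alpha*f$ being finite a.e.\ is equivalent to $I_\alpha*f$ being finite on a set of positive measure, and in fact it is enough to show finiteness on (say) $B_1$; conversely, it is enough to show it fails everywhere when \eqref{Rup} fails. So the core of the argument is the two-sided estimate
$$
I_\alpha*f(x)\;\asymp_{x}\;\int_{B_1}\frac{f(y)}{|x-y|^{N-\alpha}}\,dy\;+\;\int_{\mathbb{R}^N\setminus B_1}\frac{f(y)}{(1+|y|)^{N-\alpha}}\,dy
$$
with constants depending on $x$ in a locally bounded way.

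For the \emph{sufficiency} direction (\eqref{Rup} $\Rightarrow$ \eqref{Rwell}): fix $x$ and write $I_\alpha*f(x)=A_\alpha\big(\int_{|y-x|<1}+\int_{|y-x|\ge 1}\big)$. On the far part $|y-x|\ge 1$, for $|x|$ fixed there is a constant $C=C(|x|)$ with $|y-x|\ge C^{-1}(1+|y|)$, hence $\int_{|y-x|\ge1}|x-y|^{-(N-\alpha)}f(y)\,dy\le C^{N-\alpha}\|f\|_{L^1((1+|y|)^{-(N-\alpha)})}<\infty$. On the near part $|y-x|<1$, the integral is $\int_{|y-x|<1}|x-y|^{-(N-\alpha)}f(y)\,dy$; since $|x-y|^{-(N-\alpha)}\in L^1_{\mathrm{loc}}$ (because $N-\alpha<N$) one cannot immediately bound this by the $L^1$-weighted norm, so instead I would integrate in $x$ over a large ball $B_R$ and use Tonelli: $\int_{B_R}\int_{|y-x|<1}|x-y|^{-(N-\alpha)}f(y)\,dy\,dx\le\big(\int_{|z|<1}|z|^{-(N-\alpha)}\,dz\big)\int_{B_{R+1}}f(y)\,dy<\infty$, using that $f\in L^1_{loc}$ is a consequence of \eqref{Rup} (on any bounded set $(1+|y|)^{-(N-\alpha)}$ is bounded below). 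This shows the near part is finite for a.e.\ $x\in B_R$, and since $R$ is arbitrary, $I_\alpha*f<\infty$ a.e.

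For the \emph{necessity} direction (\eqref{Rwell} $\Rightarrow$ \eqref{Rup}): suppose $\int_{\mathbb{R}^N}(1+|y|)^{-(N-\alpha)}f(y)\,dy=+\infty$. First, if $f\notin L^1_{loc}$, say $\int_{B_{R_0}}f=\infty$, then for any $x\in B_{R_0}$ the near part $\int_{|y-x|<2R_0}|x-y|^{-(N-\alpha)}f(y)\,dy\ge (2R_0)^{-(N-\alpha)}\int_{B_{R_0}}f=\infty$, contradicting \eqref{Rwell}. So assume $f\in L^1_{loc}$; then the divergence must come from infinity: $\int_{\mathbb{R}^N\setminus B_1}(1+|y|)^{-(N-\alpha)}f(y)\,dy=\infty$. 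For $x$ in any fixed ball $B_R$ and $|y|\ge 2R$ one has $|x-y|\le |x|+|y|\le \tfrac32|y|\le 2(1+|y|)$, hence $\int_{|y|\ge 2R}|x-y|^{-(N-\alpha)}f(y)\,dy\ge 2^{-(N-\alpha)}\int_{|y|\ge 2R}(1+|y|)^{-(N-\alpha)}f(y)\,dy=\infty$, so $I_\alpha*f(x)=\infty$ for \emph{every} $x$, again contradicting \eqref{Rwell}. This proves the claim; I expect the main (mild) obstacle to be the handling of the local singularity in the sufficiency direction, which is why the Tonelli/Fubini swap over $B_R$ rather than a pointwise bound is the right device.
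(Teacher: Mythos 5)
Your proof is correct, and in outline it follows the same kernel-comparison strategy as the paper: a pointwise lower bound on $|x-y|^{-(N-\alpha)}$ by $(1+|y|)^{-(N-\alpha)}$ away from $x$ for the necessity of \eqref{Rup}, and a near/far splitting for the sufficiency. The differences are in the details and they work slightly in your favour. For necessity the paper argues directly via $|x-y|^{N-\alpha}\le c\max\{1,|x|^{N-\alpha}\}(1+|y|)^{N-\alpha}$ for all $y$, which gives \eqref{Rup} from finiteness of $I_\alpha*f$ at a single point; your contrapositive with the case split ($f\notin L^1_{loc}$ versus divergence at infinity) reaches the same conclusion and yields the extra information that $I_\alpha*f\equiv+\infty$ when \eqref{Rup} fails, which the paper states separately after the lemma with a reference to Landkof. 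For sufficiency the paper splits at $|y|\le 2|x|$ and simply asserts that the near integral $\int_{|y|\le 2|x|}f(y)|x-y|^{-(N-\alpha)}\,dy$ is finite because $f\in L^1_{loc}$; as written that is only true for a.e.\ $x$, not for every $x$, and your Tonelli average over $x\in B_R$ is exactly the device that justifies the a.e.\ statement the lemma actually claims. So your write-up is, if anything, the more careful of the two on that one point.
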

Moreover, if \eqref{Rwell} fails then $I_\alpha*f=+\infty$ everywhere in $\R^N$, see \cite{Landkof}*{p.61-62}.
We present the proof of the lemma for completeness.

\begin{proof} Assume first that \eqref{Rwell} holds. Then, for any $x,y\in \R^N$, $x\neq 0$ we have
$$
|x-y|^{N-\alpha}\leq c(|x|^{N-\alpha}+|y|^{N-\alpha})\leq c\max\{1, |x|^{N-\alpha}\} (1+|y|^{N-\alpha})
\leq c\max\{1, |x|^{N-\alpha}\} (1+|y|)^{N-\alpha}.
$$
Thus, for any $x\in \R^N\setminus\{0\}$ such that \eqref{Rwell} holds, we have
$$
\infty>(I_\alpha\ast f)(x)\geq \frac{1}{c\max\{1, |x|^{N-\alpha}\} }\int_{\R^N}\frac{f(y) dy}{(1+|y|)^{N-\alpha}},
$$
which yields \eqref{Rup}.

Conversely, assume now that \eqref{Rup} holds and let $x\in \R^N\setminus\{0\}$. From \eqref{Rup} we have $f\in L^1_{loc}(\R^N)$. Then
$$
\begin{aligned}
(I_\alpha\ast f)(x)&=\int_{|y|\leq 2|x|}\frac{f(y)dy}{|x-y|^{N-\alpha}}+\int_{|y|> 2|x|}\frac{f(y)dy}{|x-y|^{N-\alpha}}\\[0.1in]
&\leq\int_{|y|\leq 2|x|}\frac{f(y)dy}{|x-y|^{N-\alpha}}+2^{N-\alpha}\int_{|y|> 2|x|}\frac{f(y)dy}{|y|^{N-\alpha}}\\
&<\infty.
\end{aligned}
$$
\end{proof}

In the same spirit to the above proof, if $f\ge 0$ and \eqref{Rwell} (or, equivalently \eqref{Rup}) holds, then
\begin{equation}\label{Rlow}
	I_\alpha*f(x)
	\ge 	\frac{c}{|x|^{N - \alpha}}\int_{B_{|x|}(0)} f(y) \,dy.\qedhere
\end{equation}

One of the elementary yet important for our approach consequences of \eqref{Rlow} is the following estimate, which we will be using frequently, and which to some extent is the counterpart of the Harnack inequalities on the annuli in the study of \eqref{eqm}.

\begin{lemma}\label{eq-quant}
	Let $\alpha\in(0,N)$, $\theta>0$ and  $0\le f\in L^1\big((1+|x|)^{-(N-\alpha)}dx,\R^N\big)$. Then for all $R>0$ we have
	\begin{equation}\label{eRieszint}
		\int_{B_{2R}\setminus B_R} \big(I_\alpha*f\big)^\theta\ge CR^{N-(N-\alpha)\theta}\Big(\int_{B_{R}}f\Big)^\theta.
	\end{equation}
\end{lemma}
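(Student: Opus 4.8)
The plan is to use the pointwise lower bound \eqref{Rlow} together with the fact that on the annulus $B_{2R}\setminus B_R$ every point $x$ satisfies $|x|\le 2R$, so that $B_{|x|}(0)\supseteq B_R(0)$ and hence
\[
I_\alpha*f(x)\ge \frac{c}{|x|^{N-\alpha}}\int_{B_{|x|}(0)}f(y)\,dy\ge \frac{c}{(2R)^{N-\alpha}}\int_{B_{R}(0)}f(y)\,dy
\]
for a.e.\ $x\in B_{2R}\setminus B_R$. This reduces the left-hand side of \eqref{eRieszint} to a constant multiple of the measure of the annulus times the $\theta$-th power of $\int_{B_R}f$.

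First I would fix $R>0$ and invoke Lemma \ref{leq} to guarantee, from the hypothesis $f\in L^1\big((1+|x|)^{-(N-\alpha)}dx,\R^N\big)$, that \eqref{Rwell} holds, so that $I_\alpha*f$ is finite a.e.\ and the bound \eqref{Rlow} is applicable. Next I would record the elementary geometric observation above: for $x$ with $R\le|x|\le 2R$ one has $|x|^{-(N-\alpha)}\ge (2R)^{-(N-\alpha)}$ and $\int_{B_{|x|}}f\ge\int_{B_R}f$ (both by monotonicity, using $f\ge 0$). Combining these with \eqref{Rlow} gives the uniform pointwise lower bound $I_\alpha*f(x)\ge c\,2^{-(N-\alpha)}R^{-(N-\alpha)}\int_{B_R}f$ on the annulus.

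Then I would raise this inequality to the power $\theta>0$ (valid since both sides are nonnegative) and integrate over $B_{2R}\setminus B_R$. Since the right-hand side is now a constant independent of $x$, the integral simply produces the factor $|B_{2R}\setminus B_R|=\omega_N\big((2R)^N-R^N\big)=\omega_N(2^N-1)R^N$, where $\omega_N$ is the volume of the unit ball. Collecting constants yields
\[
\int_{B_{2R}\setminus B_R}\big(I_\alpha*f\big)^\theta\ge \omega_N(2^N-1)\,c^\theta\,2^{-(N-\alpha)\theta}\,R^{N}\,R^{-(N-\alpha)\theta}\Big(\int_{B_R}f\Big)^\theta = CR^{N-(N-\alpha)\theta}\Big(\int_{B_R}f\Big)^\theta,
\]
with $C=C(N,\alpha,\theta)>0$, which is exactly \eqref{eRieszint}.

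There is essentially no serious obstacle here: the argument is a direct three-line computation once \eqref{Rlow} is in hand, and the only things to be mildly careful about are that $\theta$ may be less than $1$ (so one should not assume convexity, only nonnegativity of both sides when taking $\theta$-th powers) and that the constant $C$ is allowed to depend on $N$, $\alpha$ and $\theta$ but not on $R$ or $f$. One should also note that the estimate is vacuously consistent if $\int_{B_R}f=0$. This is why the lemma is "elementary yet important": it converts the single Riesz convolution into a clean scaling relation between the mass of $f$ on a ball and the mass of $(I_\alpha*f)^\theta$ on the surrounding annulus, which is the nonlocal analogue of a Harnack-type lower bound on annuli.
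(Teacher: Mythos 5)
Your proposal is correct and is exactly the argument the paper intends: the paper's proof of Lemma \ref{eq-quant} is the one-line remark ``Follows from \eqref{Rlow} by integration,'' and your write-up simply makes explicit the monotonicity observations ($B_{|x|}\supseteq B_R$ and $|x|^{-(N-\alpha)}\ge (2R)^{-(N-\alpha)}$ on the annulus) followed by raising to the power $\theta$ and integrating. No gaps; the approach is the same.
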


\begin{proof}
	Follows from \eqref{Rlow} by integration.
\end{proof}

An obvious implication of \eqref{Rlow} is that $I_\alpha*f$ can not decay faster than $I_\alpha$ at infinity, even if the function $f$ is compactly supported.
Recall also that if $f\ge 0$ then
an elementary estimate shows that for every $x\in\R^N$,
\begin{equation}\
	I_\alpha*f(x)\ge\frac{A_\alpha}{R^{N-\alpha}}\int_{B_R(x)}f(y)dy.
\end{equation}
As a consequence, if $u\ge 0$ is a solution of \eqref{eqC} that is positive on a set of positive measure, then $u$ is everywhere strictly positive on $\R^N$ and the following lower bounds must  hold:
\begin{align}\label{2low}
u(x)&\ge c(1+|x|)^{-(N-\alpha)},\\
I_\beta \ast u^p(x)&\ge c(1+|x|)^{-(N-\beta)}.\label{betalow}
\end{align}
On the other hand, \eqref{eqCdom} requires
\begin{align}
u^p&\in L^1\big((1+|x|)^{-(N-\beta)}dx,\R^N\big),\label{eup}\\
(I_\beta \ast u^p)u^q&\in L^1\big((1+|x|)^{-(N-\alpha)}dx,\R^N\big).\label{eIbeta}
\end{align}
Combining the competing upper and lower bounds immediately leads to the following nonexistence result.

 \begin{proposition}\label{p1}
 	Let $p,q>0$ and assume that either $p\le\frac{\beta}{N-\alpha}$, or $\alpha+\beta>N$ and $q\le\frac{\beta}{N-\alpha}-1$.
 	If $u \ge 0$ is a solution of \eqref{eqC} then $u \equiv 0$.
 \end{proposition}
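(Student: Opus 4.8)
The plan is to contradict the strict pointwise positivity of a nontrivial solution by combining the two-sided bounds already assembled in the excerpt. Suppose $u\ge 0$ solves \eqref{eqC} and is positive on a set of positive measure; then as noted $u>0$ everywhere, and the lower bounds \eqref{2low}, \eqref{betalow} hold, while the finiteness condition \eqref{eqCdom} forces the integrability \eqref{eup}, \eqref{eIbeta}.

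Consider first the case $p\le\frac{\beta}{N-\alpha}$. Insert the lower bound \eqref{2low} into \eqref{eup}: since $u^p(x)\ge c(1+|x|)^{-(N-\alpha)p}$, the requirement $u^p\in L^1\big((1+|x|)^{-(N-\beta)}dx\big)$ forces
$$
\int_{\R^N}\frac{dx}{(1+|x|)^{(N-\alpha)p+(N-\beta)}}<\infty,
$$
i.e.\ $(N-\alpha)p+(N-\beta)>N$, that is $(N-\alpha)p>\beta$, contradicting $p\le\frac{\beta}{N-\alpha}$. Hence $u\equiv 0$.

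Next assume $\alpha+\beta>N$ and $q\le\frac{\beta}{N-\alpha}-1$ (so in particular $q<\frac{\beta}{N-\alpha}$, and the regime is nonempty only when $\beta>N-\alpha$). Now use the \emph{two} lower bounds together in \eqref{eIbeta}: from \eqref{2low} and \eqref{betalow},
$$
(I_\beta\ast u^p)(x)\,u^q(x)\ge c(1+|x|)^{-(N-\beta)}(1+|x|)^{-(N-\alpha)q}=c(1+|x|)^{-(N-\beta)-(N-\alpha)q},
$$
so \eqref{eIbeta} forces
$$
\int_{\R^N}\frac{dx}{(1+|x|)^{(N-\beta)+(N-\alpha)q+(N-\alpha)}}<\infty,
$$
i.e.\ $(N-\beta)+(N-\alpha)q+(N-\alpha)>N$, equivalently $(N-\alpha)q>\beta-(N-\alpha)$, i.e.\ $q>\frac{\beta}{N-\alpha}-1$, contradicting the hypothesis. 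Therefore $u\equiv 0$ in this case as well. (One should double-check that $N-\beta$ can be negative when $\beta>N-\alpha$ is combined with $\beta>N-\alpha$; the estimate $(1+|x|)^{-(N-\beta)}$ from \eqref{betalow} is still a valid \emph{lower} bound and the integrability threshold computation is unaffected, since we only use the exponent.)

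The only subtle point — and the main thing to get right rather than a genuine obstacle — is that in the second case we must feed a \emph{lower} bound on $I_\beta\ast u^p$ and a \emph{lower} bound on $u^q$ simultaneously into the integrability condition \eqref{eIbeta}; the first of these, \eqref{betalow}, is itself a consequence of \eqref{2low} via \eqref{Rlow}, so the whole argument reduces to the elementary lower bound \eqref{Rlow} together with the convergence threshold for $\int(1+|x|)^{-s}dx$ on $\R^N$, namely $s>N$. No comparison principle, regularity, or further structural input is needed.
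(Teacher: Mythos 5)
Your argument is correct and is exactly the paper's proof of Proposition \ref{p1}, merely with the exponent arithmetic written out: the first case plays \eqref{2low} against \eqref{eup}, the second plays \eqref{2low} and \eqref{betalow} against \eqref{eIbeta}, and the thresholds $(N-\alpha)p>\beta$ and $q>\frac{\beta}{N-\alpha}-1$ match. (Your parenthetical worry about $N-\beta$ being negative is moot, since $\beta<N$ always.)
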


 \begin{proof}
First we note that \eqref{2low} and \eqref{eup} are incompatible when $p\le\frac{\beta}{N-\alpha}$. Then we observe that \eqref{2low}, \eqref{betalow} and \eqref{eIbeta} are incompatible when $0<q\le\frac{\beta}{N-\alpha}-1$.
\end{proof}

\begin{remark}
We will see in Proposition \ref{p47} below that $q\le\frac{\beta}{N-\alpha}-1$ is suboptimal for the nonexistence and could be refined.
\end{remark}

\section{Nonlocal positivity principle}

The nonexistence result in Proposition \ref{p1} "decouples'' the values of $p$ and $q$. In order to deduce an estimate which involves the quantity $p+q$ which appears in Theorem \ref{t1}, we need the following lemma, inspired by \cite[Proposition 2.1]{MVS-H} and \cite{MVS-JDE}*{Section 3}.

	\begin{lemma}[Nonlocal positivity principle]
		\label{lV}
		Let $\alpha\in(0,N)$ and $V:\R^N\to[0,\infty)$ be a measurable function.
		Assume that there exists $u\in L^1_{loc}(\R^N)$ such that $u>0$ a.e.~in $\R^N$, $Vu\in L^1\big((1+|x|)^{-(N-\alpha)}dx,\R^N\big)$ and
		\begin{equation}\label{eqV}		
			u \ge I_{\alpha}\ast (Vu)\quad\text{a.e.~in}\quad\R^N.
		\end{equation}
		Then for every $R>0$ and $0\le \varphi\in C^\infty_c(B_R)$,
		\begin{equation}\label{nonlocal-V}
		\int_{B_R}\varphi^2\ge CR^{\alpha-N}\left(\int_{B_R} \sqrt{V}\varphi\right)^2.
		\end{equation}
	\end{lemma}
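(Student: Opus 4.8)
The plan is to establish the inequality \eqref{nonlocal-V} by combining the pointwise lower bound \eqref{eqV} with a clever test-function manipulation. First I would fix $R>0$ and $0\le\varphi\in C^\infty_c(B_R)$, and start from the observation that since $u>0$ a.e., we may write $u = \sqrt{u}\cdot\sqrt{u}$ and pair the inequality \eqref{eqV} against $\varphi^2/u$ (which is a legitimate nonnegative weight since $u>0$ a.e.~and $\varphi$ has compact support, although one must check $\varphi^2/u\in L^1_{loc}$ — this is where $u\in L^1_{loc}$ with $u>0$ a.e.~is used, possibly via a truncation $u\wedge n$ and monotone convergence to be safe). Integrating \eqref{eqV} against $\varphi^2/u$ gives
\begin{equation*}
\int_{B_R}\varphi^2 = \int_{\R^N}\frac{\varphi^2}{u}\,u \ge \int_{\R^N}\frac{\varphi^2(x)}{u(x)}\big(I_\alpha\ast(Vu)\big)(x)\,dx.
\end{equation*}

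Next I would unfold the Riesz potential and symmetrise. Writing out the convolution, the right-hand side becomes a double integral
\begin{equation*}
\int_{\R^N}\int_{\R^N}\frac{\varphi^2(x)}{u(x)}\,\frac{A_\alpha}{|x-y|^{N-\alpha}}\,V(y)u(y)\,dy\,dx.
\end{equation*}
By the symmetry of the kernel $|x-y|^{-(N-\alpha)}$, I would symmetrise in $x\leftrightarrow y$ to replace $\frac{\varphi^2(x)}{u(x)}V(y)u(y)$ by $\tfrac12\big(\frac{\varphi^2(x)}{u(x)}V(y)u(y) + \frac{\varphi^2(y)}{u(y)}V(x)u(x)\big)$. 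The key algebraic step is then the pointwise bound, valid for all $a,b>0$ and all reals $s,t$,
\begin{equation*}
\frac{t^2}{a}\,b + \frac{s^2}{b}\,a \ge 2|s||t|\ge 2st,
\end{equation*}
applied with $a=u(x)$, $b=u(y)$, $t=\varphi(x)$, $s=\varphi(y)$ — but one needs the weights $V$ in there, so actually I would apply it after factoring: $\frac{\varphi^2(x)}{u(x)}V(y)u(y) + \frac{\varphi^2(y)}{u(y)}V(x)u(x) \ge 2\varphi(x)\varphi(y)\sqrt{V(x)V(y)}$ by AM–GM (this is just $A+B\ge 2\sqrt{AB}$ with $A,B$ the two terms, and the square roots of the $u$'s and $V$'s combine correctly). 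Hence
\begin{equation*}
\int_{B_R}\varphi^2 \ge A_\alpha\int_{\R^N}\int_{\R^N}\frac{\big(\sqrt{V}\varphi\big)(x)\,\big(\sqrt{V}\varphi\big)(y)}{|x-y|^{N-\alpha}}\,dx\,dy.
\end{equation*}

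Finally I would bound the remaining double integral from below by exploiting that both arguments lie in $B_R$: for $x,y\in B_R$ one has $|x-y|\le 2R$, so $|x-y|^{-(N-\alpha)}\ge (2R)^{-(N-\alpha)}$, and therefore the double integral is at least $(2R)^{-(N-\alpha)}\big(\int_{B_R}\sqrt{V}\varphi\big)^2$. Collecting constants ($A_\alpha 2^{-(N-\alpha)}$, absorbed into $C$) yields exactly \eqref{nonlocal-V}. The main obstacle I anticipate is the integrability bookkeeping needed to justify pairing \eqref{eqV} with $\varphi^2/u$ and the subsequent application of Fubini's theorem: one must ensure the double integral converges so that the symmetrisation is legitimate. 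The natural fix is to first truncate, replacing $u$ by $u_n:=\min\{u,n\}\vee \tfrac1n$ restricted suitably, or to note that the hypothesis $Vu\in L^1\big((1+|x|)^{-(N-\alpha)}dx\big)$ together with $\varphi\in C_c^\infty(B_R)$ and Lemma \ref{leq} guarantees $I_\alpha\ast(Vu)<\infty$ a.e.~and is locally integrable against $\varphi^2/u$ once we know $u$ is bounded below on $\mathrm{supp}\,\varphi$ — which, however, need not hold, so the truncation/monotone-convergence route is the safe one, passing to the limit at the very end.
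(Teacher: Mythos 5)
Your proposal is correct and follows essentially the same route as the paper: test \eqref{eqV} against $\varphi^2/u$, symmetrise in $x\leftrightarrow y$, apply AM--GM (which is exactly the paper's completing-the-square with the nonnegative remainder dropped), and bound $I_\alpha(x-y)$ below by $A_\alpha(2R)^{-(N-\alpha)}$ on $B_R\times B_R$. The truncation machinery you anticipate is unnecessary: since every integrand is nonnegative, Tonelli's theorem justifies the pairing and the symmetrisation with values in $[0,+\infty]$, and the finiteness of $\int\varphi^2$ on the left closes the argument.
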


\begin{proof}
Take $\psi:=\frac{\varphi^2}{u}$ as a test function in \eqref{eqV}. Then
\begin{align*}
\int_{B_R}\varphi^2&\ge\iint_{B_R\times B_R} I_\alpha(|x-y|)V(y)u(y)\frac{\varphi^2(x)}{u(x)}dy\,dx\\
&=\frac12\iint_{B_R\times B_R} I_\alpha(|x-y|)\left(V(x)u(x)\frac{\varphi^2(y)}{u(y)}+V(y)u(y)\frac{\varphi^2(x)}{u(x)}\right)dxdy\\
&=\iint_{B_R\times B_R} I_\alpha(|x-y|)\sqrt{V(x)V(y)}\varphi(y)\varphi(x)dxdy\\
&+\frac12\iint_{B_R\times B_R} I_\alpha(|x-y|)u(x)u(y)\left(\sqrt{V(x)}\frac{\varphi(y)}{u(y)}-\sqrt{V(y)}\frac{\varphi(x)}{u(x)}\right)^2dxdy\\
&\ge \frac{A_\alpha}{2^{N - \alpha} R^{N - \alpha}}\left(\int_{B_R}\sqrt{V(x)}\varphi(x)dx\right)^2,
\end{align*}
since
\begin{equation}\label{eqRieszlow}
I_\alpha(x-y) \ge \frac{A_\alpha}{2^{N - \alpha} R^{N - \alpha}}\qquad(x, y \in B_R),
\end{equation}
which completes the proof.
\end{proof}

\begin{remark}
{\rm Nonlocal inequality \eqref{eqV} can be interpreted as the ``inversion'' of the fractional Schr\"odinger inequality
$$(-\Delta)^{\alpha/2}u-Vu\ge 0\quad\text{in}\quad\R^N.$$
In this context Lemma \ref{lV} can be seen as a higher--order version of the fractional
Agmon--Allegretto--Piepenbrink's positivity principle: if \eqref{eqV} has a positive supersolution then a certain variational inequality which involves the potential $V$ must hold.
We will see that Lemma \ref{lV} alongside with the standard integral estimate \eqref{eRieszint} of the Riesz potentials are sufficient for the complete analysis of the existence and nonexistence of positive solutions of the nonlinear inequality \eqref{eqC}.}
\end{remark}

Using Lemma \ref{lV} we establish the following estimate.
	
	\begin{proposition}
		\label{ppq}
		Let $p,q>0$ and $u>0$ be a solution of \eqref{eqC}.
		Then, for every $R>0$ and every $\varphi \in C^\infty_c(B_R)$,
		\begin{equation}\label{nonlocal}
		\int_{B_R}\varphi^2\ge C R^{\alpha+\beta-2N}\left(\int_{B_R}u^p\right)\left(\int_{B_R} u^\frac{q-1}{2}\varphi\right)^2.
		\end{equation}
	\end{proposition}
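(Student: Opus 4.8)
The plan is to apply the nonlocal positivity principle of Lemma \ref{lV} with a cleverly chosen potential $V$. Since $u>0$ solves \eqref{eqC}, we may write
$$
u \ge I_\alpha * (Vu), \qquad V := (I_\beta * u^p)\,u^{q-1},
$$
so that $(I_\beta*u^p)u^q = Vu$ and hypothesis \eqref{eqCdom}, equivalently \eqref{cd}, guarantees exactly the integrability $Vu \in L^1\big((1+|x|)^{-(N-\alpha)}dx\big)$ required by Lemma \ref{lV}. Applying \eqref{nonlocal-V} with this $V$ and an arbitrary $0 \le \varphi \in C^\infty_c(B_R)$ gives
$$
\int_{B_R}\varphi^2 \ge C R^{\alpha-N}\left(\int_{B_R} \sqrt{(I_\beta*u^p)}\; u^{\frac{q-1}{2}}\varphi\right)^2.
$$

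The remaining step is to bound $I_\beta * u^p$ from below on $B_R$ by a quantity independent of the spatial variable, so that it can be pulled out of the integral. This is precisely the role of the pointwise lower bound \eqref{eqRieszlow} (with $\beta$ in place of $\alpha$): for $x, y \in B_R$ one has $I_\beta(x-y) \ge A_\beta (2R)^{-(N-\beta)}$, hence
$$
I_\beta * u^p(x) \ge \frac{A_\beta}{(2R)^{N-\beta}}\int_{B_R} u^p \qquad (x \in B_R).
$$
Substituting this into the previous display, the factor $\big(\int_{B_R}u^p\big)^{1/2}$ comes out of the integral over $\varphi$, and collecting the powers of $R$ — namely $R^{\alpha - N}$ from Lemma \ref{lV} times $R^{-(N-\beta)}$ from the kernel bound, giving $R^{\alpha+\beta-2N}$ — yields exactly \eqref{nonlocal}. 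Note that $\varphi$ may be assumed nonnegative without loss of generality, since replacing $\varphi$ by $|\varphi|$ only increases the right-hand side while leaving $\int_{B_R}\varphi^2$ unchanged, so the inequality for general $\varphi \in C^\infty_c(B_R)$ follows from the nonnegative case.

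I do not expect any serious obstacle here: the argument is essentially a bookkeeping exercise combining Lemma \ref{lV} with the elementary kernel estimate \eqref{eqRieszlow}. The one point deserving a moment's care is verifying that the integrability condition needed to invoke Lemma \ref{lV} — that $Vu = (I_\beta*u^p)u^q$ lies in $L^1\big((1+|x|)^{-(N-\alpha)}dx\big)$ — is genuinely supplied by the standing hypothesis \eqref{eqCdom} via the equivalence recorded in \eqref{cd}; this is immediate. A secondary point is that $V$ as defined is finite a.e.\ and measurable, which follows since $I_\beta*u^p < +\infty$ a.e.\ (a consequence of \eqref{eup}, itself forced by \eqref{eqCdom}) and $u>0$ a.e.
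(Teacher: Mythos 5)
Your argument is correct and is essentially the paper's own proof: apply Lemma \ref{lV} with $V=(I_\beta*u^p)u^{q-1}$ and then use the analogue of the kernel lower bound \eqref{eqRieszlow} for $I_\beta$ to pull $\big(\int_{B_R}u^p\big)^{1/2}R^{(\beta-N)/2}$ out of the integral. The bookkeeping of the powers of $R$ and the integrability check for invoking Lemma \ref{lV} are both handled correctly.
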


	\begin{proof}
		For every $\varphi\in C^\infty_c(B_R)$, by Lemma \ref{lV} with  $V=(I_\beta \ast u^p)u^{q-1}$, and using a similar inequality to \eqref{eqRieszlow} for $I_\beta \ast u^p$, we obtain
		\begin{align*}\label{nonlocal-V}
		\int_{B_R}\varphi^2&\ge CR^{\alpha-N}\left(\int_{B_R} \big((I_\beta \ast u^p)u^{q-1}\big)^{1/2}\varphi\right)^2\\
		&\ge C'R^{\alpha+\beta-2N}\left(\int_{B_R}u^p\right)\left(\int_{B_R} u^\frac{q-1}{2}\varphi\right)^2.\qedhere
		\end{align*}
\end{proof}

One of the principal tools in the subsequent analysis is the following decay estimate on the solutions of \eqref{eqC}, which is an adaptation of \eqref{nonlocal}. Note that for $q<1$ our estimate contains a lower bound on the solution, since the 2nd integral involves a negative power of $u$.

\begin{corollary}\label{c1}
Let $p,q>0$ and $u>0$ be a solution of \eqref{eqC}.
Then for every $R>0$,
\begin{equation}\label{nonlocal4}
\left(\int_{B_{2R}}u^p\right)\left(\int_{B_{2R}\setminus B_R} u^\frac{q-1}{2}\right)^2 \leq CR^{3N-\alpha-\beta}.
\end{equation}
\end{corollary}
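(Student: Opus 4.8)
The plan is to derive \eqref{nonlocal4} from Proposition \ref{ppq} by choosing a suitable family of test functions $\varphi$ supported in $B_{2R}$ but bounded below on the annulus $B_{2R}\setminus B_R$, and then tracking the powers of $R$. First I would fix $R>0$ and pick a cut-off function $\varphi=\varphi_R\in C^\infty_c(B_{2R})$ with $0\le\varphi_R\le 1$, $\varphi_R\equiv 1$ on $B_{2R}\setminus B_R$ (more precisely on a fixed-proportion sub-annulus, or simply on the whole annulus after a harmless rescaling of the radii), and $|\nabla\varphi_R|\le C/R$; such a $\varphi_R$ is standard and its existence requires no computation. With this choice, $\int_{B_{2R}}\varphi_R^2\le |B_{2R}|=C R^{N}$ on the left-hand side of \eqref{nonlocal} (applied with $2R$ in place of $R$), while on the right-hand side $\int_{B_{2R}}u^{(q-1)/2}\varphi_R\ge \int_{B_{2R}\setminus B_R}u^{(q-1)/2}$ because $\varphi_R\equiv 1$ there and the integrand is nonnegative, and $\int_{B_{2R}}u^p\ge\int_{B_{2R}}u^p$ trivially (we keep it as is).

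Substituting into \eqref{nonlocal} with radius $2R$ gives
\begin{equation*}
C R^{N}\ \ge\ \int_{B_{2R}}\varphi_R^2\ \ge\ C'(2R)^{\alpha+\beta-2N}\left(\int_{B_{2R}}u^p\right)\left(\int_{B_{2R}\setminus B_R}u^{\frac{q-1}{2}}\right)^2,
\end{equation*}
and rearranging, absorbing the constant $2^{\alpha+\beta-2N}$ into $C$, yields
\begin{equation*}
\left(\int_{B_{2R}}u^p\right)\left(\int_{B_{2R}\setminus B_R}u^{\frac{q-1}{2}}\right)^2\ \le\ C\, R^{N-(\alpha+\beta-2N)}\ =\ C\, R^{3N-\alpha-\beta},
\end{equation*}
which is exactly \eqref{nonlocal4}. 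Note that the integrals $\int_{B_{2R}}u^p$ and $\int_{B_{2R}\setminus B_R}u^{(q-1)/2}$ are finite: $u^p$ is locally integrable by \eqref{eup}, and $u>0$ is locally bounded below away from zero (indeed $u(x)\ge c(1+|x|)^{-(N-\alpha)}$ by \eqref{2low}, together with local integrability of $u$ from $u\in L^1_{loc}$), so the negative power $u^{(q-1)/2}$ when $q<1$ is dominated on the bounded annulus by a power of a strictly positive, locally bounded-below function; when $q\ge 1$ this point is trivial. This finiteness should be remarked on but needs no serious argument.

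The only mildly delicate point is the replacement of the open condition "$\varphi\equiv 1$ on $B_{2R}\setminus B_R$" with genuinely compactly supported smooth test functions: strictly, $\varphi_R$ must vanish near $\partial B_{2R}$, so it cannot equal $1$ on the full closed annulus up to the outer boundary. I would handle this either by taking $\varphi_R\equiv 1$ on $B_{2R-\delta}\setminus B_{R}$ for small $\delta$ and letting $\delta\to 0$ (monotone convergence on the right, the left-hand bound $CR^N$ being uniform in $\delta$), or — cleaner — by simply proving the estimate with $B_{3R/2}\setminus B_R$ or $B_{2R}\setminus B_{3R/2}$ on the left and noting that the dyadic structure of the statement is unaffected, since all that matters is that inner and outer radii are comparable to $R$. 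I do not expect any real obstacle here; the content of the corollary is entirely contained in Proposition \ref{ppq}, and this is a bookkeeping argument about test functions and exponents of $R$.
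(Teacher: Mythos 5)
Your proposal is correct and follows essentially the same route as the paper: the authors also apply Proposition \ref{ppq} to a rescaled cut-off $\varphi_R(x)=\varphi(x/R)$ that equals $1$ on $\overline{B_{2R}}\setminus B_R$, is bounded by $1$, and is supported in an annulus $\overline{B_{4R}}\setminus B_{R/2}$ (thereby sidestepping the boundary issue you flag by enlarging the ambient ball rather than by a $\delta\to 0$ limit), and the exponent bookkeeping $R^N\ge CR^{\alpha+\beta-2N}(\cdots)$ is identical. No gaps.
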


\begin{proof}
Take $\varphi_R(x)=\varphi(x/R)$, where $\varphi \in C^\infty_c(\R^N)$ is such that $\mathrm{supp}(\varphi) \subset \overline B_{4} \setminus  B_{1/2}$, $\varphi \equiv 1$ on $\overline B_{2}\setminus  B_{1}$ and $0\le \varphi \le 1$. Then,by \eqref{nonlocal} we find
\begin{align*}\label{nonlocal2}
cR^N=\int_{B_{4R}}\varphi_R^2&\ge C' R^{\alpha+\beta-2N}\left(\int_{B_{4R}}u^p\right)\left(\int_{B_{4R}} u^\frac{q-1}{2}\varphi_R\right)^2\\
&\ge  C' R^{\alpha+\beta-2N}\left(\int_{B_{2R}}u^p\right)\left(\int_{B_{2R}\setminus B_R} u^\frac{q-1}{2}\right)^2.\qedhere
\end{align*}
\end{proof}

\section{Nonexistence}\label{none}

In this section we derive several nonexistence result for \eqref{eqC}. Our approach is inspired by
\cite{MVS-JDE} which studied the inequality \eqref{eqm} in the semilinear the case $\alpha=2$, yet with substantial modifications. In particular, in this work we completely avoid the use of the comparison principle and Harnack's inequalities, which are not applicable in the framework of the double--nonlocal inequality \eqref{eqC}. It turns out that Harnack inequality estimates in the context of \eqref{eqC} can be replaced by the estimate \eqref{eRieszint}.

\begin{proposition}\label{p5}
	Let $p,q>0$ and assume that \(p+q<1\).
	If \(u \ge 0\) is a solution of \eqref{eqC} then \(u \equiv 0\).
\end{proposition}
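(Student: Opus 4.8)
The plan is to exploit Corollary~\ref{c1} together with the lower bound \eqref{2low} to force a contradiction when $p+q<1$. The key observation is that in Corollary~\ref{c1} the second factor involves the power $u^{\frac{q-1}{2}}$ with negative exponent (since $q<1$), so it is controlled \emph{from below} by the upper bound on $u$, while the first factor $\int_{B_{2R}}u^p$ is controlled from below by the pointwise lower bound \eqref{2low}. First I would use \eqref{2low}, namely $u(x)\ge c(1+|x|)^{-(N-\alpha)}$, to estimate
\[
\int_{B_{2R}}u^p\ge c\int_{B_{2R}}(1+|x|)^{-(N-\alpha)p}\,dx\ge c'R^{N-(N-\alpha)p}
\]
for $R\gg1$ (the integral is dominated by the region near $\partial B_{2R}$; if $(N-\alpha)p\ge N$ one still gets at least a positive constant, but the relevant regime will turn out to be the one where this grows). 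Similarly, since $\frac{q-1}{2}<0$, on the annulus $B_{2R}\setminus B_R$ we have $u^{\frac{q-1}{2}}\ge c\,(1+|x|)^{-(N-\alpha)\frac{q-1}{2}}\ge c' R^{(N-\alpha)\frac{1-q}{2}}$, hence
\[
\int_{B_{2R}\setminus B_R}u^{\frac{q-1}{2}}\ge c'R^{N+(N-\alpha)\frac{1-q}{2}}.
\]

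Substituting both bounds into \eqref{nonlocal4} gives
\[
R^{N-(N-\alpha)p}\cdot R^{2N+(N-\alpha)(1-q)}\le C R^{3N-\alpha-\beta},
\]
i.e. the exponent inequality $3N-(N-\alpha)p+(N-\alpha)(1-q)\le 3N-\alpha-\beta$, which simplifies to $(N-\alpha)(p+q-1)\ge\alpha+\beta-(N-\alpha)=\beta+\alpha-N+\alpha$... I would carefully recompute: the left exponent is $\big(N-(N-\alpha)p\big)+\big(2N+(N-\alpha)(1-q)\big)=3N-(N-\alpha)p+(N-\alpha)(1-q)$, and requiring this $\le 3N-\alpha-\beta$ yields $(N-\alpha)(1-q)-(N-\alpha)p\le-\alpha-\beta$, that is $(N-\alpha)(p+q-1)\ge\alpha+\beta>0$. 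Since $p+q<1$ the left side is strictly negative, a contradiction for all large $R$. Therefore no nontrivial solution exists.

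The main subtlety I anticipate is making the lower bound $\int_{B_{2R}}u^p\ge c'R^{N-(N-\alpha)p}$ rigorous and uniform in $R$: when $(N-\alpha)p\ge N$ the integrand is integrable at infinity and the naive estimate only gives a positive constant rather than a growing power, so one should instead keep $\int_{B_{2R}}u^p\ge c$ (a fixed positive constant, valid since $u>0$ everywhere and is locally integrable and bounded below on, say, $B_1$). Running the argument with this weaker bound replaces the exponent inequality with $2N+(N-\alpha)(1-q)\le 3N-\alpha-\beta$, i.e. $(N-\alpha)(1-q)\le N-\alpha-\beta$, i.e. $(N-\alpha)q\ge\beta$; this is not automatically contradicted by $p+q<1$ alone, so one genuinely needs the growing lower bound on $\int u^p$ in that borderline subcase, which forces a small case split on whether $(N-\alpha)p<N$ or not. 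In the regime $p+q<1$ one has $p<1<\frac{N}{N-\alpha}$ precisely when $\alpha$ is not too large, but in general a short separate argument (e.g. iterating \eqref{2low} through the inequality, or using \eqref{betalow} to improve the lower bound on $u$ before applying Corollary~\ref{c1}) will handle the leftover case. I expect this bookkeeping of exponents and the borderline case to be the only real work; the core mechanism — negative powers of $u$ in Corollary~\ref{c1} turning into lower bounds that clash with the polynomial upper bound on the right — is robust.
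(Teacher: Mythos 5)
Your argument has a genuine gap at its central step: the claimed lower bound on the negative-power integral goes in the wrong direction. From \eqref{2low} you have $u(x)\ge c(1+|x|)^{-(N-\alpha)}$; raising this to the \emph{negative} exponent $\tfrac{q-1}{2}$ reverses the inequality and yields the \emph{upper} bound $u^{\frac{q-1}{2}}\le C(1+|x|)^{(N-\alpha)\frac{1-q}{2}}$, hence only $\int_{B_{2R}\setminus B_R}u^{\frac{q-1}{2}}\le CR^{N+(N-\alpha)\frac{1-q}{2}}$. To get the lower bound you write down, you would need an a priori \emph{upper} bound $u\le C(1+|x|)^{-(N-\alpha)}$, which is not available for solutions of the inequality \eqref{eqC} (indeed, in other parameter ranges the paper constructs solutions decaying strictly slower than $|x|^{-(N-\alpha)}$, cf.\ Proposition \ref{sharp-B2}). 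A further warning sign: your exponent computation, if valid, would give nonexistence for all $p+q<\frac{N+\beta}{N-\alpha}$ without ever using $p+q<1$, i.e.\ it would subsume Proposition \ref{p6} by a two-line pointwise argument, which should make you suspicious. (Your worry about the subcase $(N-\alpha)p\ge N$ for the first factor is, by contrast, vacuous here: $p<1$ forces $(N-\alpha)p<N$.)

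The correct way to extract a lower bound on $\int_{B_{2R}\setminus B_R}u^{\frac{q-1}{2}}$ is indirect: write $|B_{2R}\setminus B_R|=\int u^{\frac{p(1-q)}{2p+1-q}}u^{\frac{(q-1)p}{2p+1-q}}$ and apply H\"older, which bounds $cR^N$ by $\bigl(\int u^p\bigr)^{\frac{1-q}{2p+1-q}}\bigl(\int u^{\frac{q-1}{2}}\bigr)^{\frac{2p}{2p+1-q}}$. This is what the paper does. Combined with \eqref{nonlocal4}, one is left with a factor $\bigl(\int_{B_{2R}\setminus B_R}u^p\bigr)^{\frac{1-q-p}{2p+1-q}}$ whose exponent is positive precisely because $p+q<1$, and the domain condition \eqref{eup} forces $\int_{B_{2R}\setminus B_R}u^p=o(R^N)$, producing the contradiction $cR^N\le o(1)R^N$. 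So the hypothesis $p+q<1$ enters through the sign of a H\"older exponent and through the integrability condition \eqref{eup}, not through a pointwise decay estimate; your proposal as written cannot be repaired without replacing its key inequality by an argument of this H\"older type.
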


\begin{proof}
		Since  $u^p\in L^1((1+|x|)^{-(N-\beta)}dx)$ and $\beta<N$, by Lebesgue's dominated convergence theorem
	\[
	\int_{B_{2R} \setminus B_R} u^p = o\big(R^{N-\beta}\big)=o\big(R^{N}\big) \quad\mbox{ as }R\to \infty.
	\]
 Since \(p+q  < 1\), we may apply H\"older and then the estimate  \eqref{nonlocal4}  to obtain
	\begin{align*}
	cR^N=\int_{B_{2R} \setminus B_R} 1 &= \int_{B_{2R} \setminus B_R} u^\frac{p(1-q)}{2p+1-q} \; u^\frac{(q-1)p}{2p+1-q}\\
	&\le \Bigl( \int_{B_{2R} \setminus B_R} u^p \Bigr)^\frac{1 - q}{2p + 1 - q}\Bigl( \int_{B_{2R} \setminus B_R} u^\frac{q - 1}{2} \Bigr)^\frac{2p}{2p + 1 - q}\\
	&\le \Bigl( \int_{B_{2R} \setminus B_R} u^p \Bigr)^\frac{1 - q - p}{2p + 1 - q}\biggl[\Bigl( \int_{B_{2R} \setminus B_R} u^p \Bigr) \Bigl( \int_{B_{2R} \setminus B_R} u^\frac{q - 1}{2} \Bigr)^2\biggr]^\frac{p}{2p + 1 - q}\\
	&\le \Bigl( \int_{B_{2R} \setminus B_R} u^p \Bigr)^\frac{1 - q - p}{2p + 1 - q}\Bigl( CR^{3N-\alpha-\beta}\Bigr)^\frac{p}{2p + 1 - q}\\
	&\le o(1)R^{N\frac{1 - q - p}{2p + 1 - q}}\bigl(R^{3N}\bigr)^\frac{p}{2p + 1 - q}\\
	&=o(1)R^N,
	\end{align*}
 which raises a contradiction.
\end{proof}

\begin{proposition}\label{p6}
	Let $p,q>0$ and assume that $1\le p+q \le \frac{N+\beta}{N - \alpha}$.
	If \(u \ge 0\) is a solution of \eqref{eqC} then \(u \equiv 0\).
\end{proposition}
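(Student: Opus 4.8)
The plan is to argue by contradiction, following the same circle of ideas used in Proposition~\ref{p5}, but now exploiting the critical Sobolev--type balance encoded in the exponent $\frac{N+\beta}{N-\alpha}$ together with the lower bound \eqref{2low}. Suppose $u\ge 0$ is a nontrivial solution of \eqref{eqC}. By the remarks following Lemma~\ref{leq}, $u$ is strictly positive everywhere and satisfies the lower bounds \eqref{2low} and \eqref{betalow}, as well as the integrability conditions \eqref{eup} and \eqref{eIbeta}. The strategy is to feed a carefully chosen exponent interpolation into Corollary~\ref{c1}. Writing the estimate \eqref{nonlocal4} over dyadic annuli, one wants to balance the two factors $\int_{B_{2R}}u^p$ and $\bigl(\int_{B_{2R}\setminus B_R}u^{(q-1)/2}\bigr)^2$ against the right-hand side $R^{3N-\alpha-\beta}$.

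First I would reduce \eqref{nonlocal4} to an inequality on annuli alone: since $\int_{B_{2R}}u^p\ge \int_{B_{2^{k}R}\setminus B_{2^{k-1}R}}u^p$ for each dyadic scale $\le 2R$, or more efficiently by replacing $B_{2R}$ with a single annulus via \eqref{2low} (which gives $\int_{B_{2R}\setminus B_R}u^p\ge cR^{N-p(N-\alpha)}$ when $p<\frac{N}{N-\alpha}$, and an even larger bound otherwise), we obtain a clean lower bound on $\int_{B_{2R}}u^p$. The case distinctions here — whether $p(N-\alpha)<N$, $=N$ or $>N$ — parallel the trichotomy in the $\beta<N-\alpha$, $\beta=N-\alpha$, $\beta>N-\alpha$ branches of Theorem~\ref{t1}, and in the borderline situations the lower bound on $\int_{B_{2R}}u^p$ picks up a logarithmic factor. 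The second, more delicate ingredient is a lower bound on $\int_{B_{2R}\setminus B_R}u^{(q-1)/2}$: for $q\ge 1$ this is controlled below by combining Jensen/H\"older with $\int_{B_{2R}\setminus B_R}u^p$, while for $q<1$ the negative power forces us to use the pointwise lower bound \eqref{2low} directly, giving $\int_{B_{2R}\setminus B_R}u^{(q-1)/2}\ge cR^{N-\frac{(1-q)(N-\alpha)}{2}}$. In every case, substituting back into \eqref{nonlocal4} produces an inequality of the shape $R^{A}\le CR^{3N-\alpha-\beta}$ (possibly with a $\log R$ on the left), where the exponent $A$ is an explicit affine function of $p,q$. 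A short computation shows that $A\ge 3N-\alpha-\beta$ precisely in the closed region $p+q\le\frac{N+\beta}{N-\alpha}$ (with equality on the critical hyperbola), so letting $R\to\infty$ yields the contradiction; on the critical line itself the surviving $\log R$ factor (or the fact that the relevant integral is $o$ of the pure power, by dominated convergence as in Proposition~\ref{p5}) is what closes the argument.

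The main obstacle I anticipate is the bookkeeping in the borderline and sub-/super-critical exponent cases: the lower bounds on $\int u^p$ and on $\int u^{(q-1)/2}$ change form at the thresholds $p(N-\alpha)=N$ and $q=1$, and one has to check that the resulting exponent comparison $A$ vs.\ $3N-\alpha-\beta$ really is an equality exactly on $p+q=\frac{N+\beta}{N-\alpha}$ and a strict inequality (in the right direction) below it, uniformly across all the sub-cases. A secondary subtlety is handling the critical case $p+q=\frac{N+\beta}{N-\alpha}$ itself, where a pure power count only gives $\le$, not a contradiction: there one must upgrade a power bound on a tail integral to an $o(\cdot)$ statement using \eqref{eup} or \eqref{eIbeta} and dominated convergence — exactly the device already deployed at the start of the proof of Proposition~\ref{p5}. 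Once these case distinctions are organised, each individual estimate is routine.
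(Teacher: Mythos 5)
Your outline breaks down at two essential points, and both are places where the paper's proof uses an idea your sketch does not contain.

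First, the sublinear case $q<1$ (which is unavoidable here, since the region $1\le p+q\le\frac{N+\beta}{N-\alpha}$ contains many points with $q<1$). You propose to lower-bound $\int_{B_{2R}\setminus B_R}u^{(q-1)/2}$ by inserting the pointwise bound \eqref{2low}. But for $q<1$ the exponent $\tfrac{q-1}{2}$ is negative, so the lower bound $u\ge c|x|^{-(N-\alpha)}$ gives an \emph{upper} bound on $u^{(q-1)/2}$, not a lower one; and no pointwise upper bound on $u$ is available. This is exactly the obstruction the paper circumvents: it first applies Cauchy--Schwarz to get $\int_{B_{2R}}u^p\ge cR^{-N}\bigl(\int_{B_{2R}}u^{p/2}\bigr)^2$, and then a second Cauchy--Schwarz merges the two factors of \eqref{nonlocal4} into the single quantity $\bigl(\int_{B_{2R}\setminus B_R}u^{(p+q-1)/4}\bigr)^4$, whose exponent is nonnegative precisely because $p+q\ge1$; only then is \eqref{2low} applied. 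Your separate treatment of the two factors cannot be repaired without this (or an equivalent) recombination step.

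Second, the critical case $p+q=\frac{N+\beta}{N-\alpha}$. You suggest closing it by upgrading an upper bound to an $o(\cdot)$ statement via dominated convergence, as in Proposition \ref{p5}. That device has no purchase here: the upper bound $R^{3N-\alpha-\beta}$ in \eqref{nonlocal4} comes from $\int\varphi_R^2\sim R^N$ and admits no $o$-improvement, and the lower bound comes from \eqref{2low}, which is sharp. The paper instead \emph{strengthens the lower bound}: it shows $\int_{\R^N}(I_\beta\ast u^p)u^q=\infty$ by a symmetrization/H\"older argument reducing to $\iint u^{(p+q)/2}I_\beta(x-y)u^{(p+q)/2}$ together with \eqref{2low}, and then feeds this divergence through Lemma \ref{eq-quant} to show $\int_{B_{2R}\setminus B_R}u^{(p+q-1)/4}$ exceeds $R^{N-\frac{\alpha+\beta}{4}}$ by a factor tending to infinity, contradicting the subcritical estimate. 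Without some such mechanism your critical case remains open. (Your bookkeeping for $q\ge1$ in the strictly subcritical range can be made to work, but the two gaps above are genuine.)
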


\begin{proof} Assume first \(p + q<\frac{N + \beta}{N - \alpha}\).
	By the Cauchy--Schwarz inequality,
	$$\int_{B_{2R}} u^p\ge cR^{-N}\left(\int_{B_{2R}}u^\frac{p}{2}\right)^2,$$
	and so \eqref{nonlocal4} implies
		\begin{equation}\label{nonlocal4-2}
		CR^{4N-\alpha-\beta}\ge\left(\int_{B_{2R}}u^\frac{p}{2}\right)^2\left(\int_{B_{2R}\setminus B_R} u^\frac{q-1}{2}\right)^2.
	\end{equation}
	Using \eqref{nonlocal4-2} and the Cauchy--Schwarz again together with $u\ge c|x|^{-(N-\alpha)}$ in $\R^N\setminus B_1$ (that follows from \eqref{2low}), we obtain
	\begin{equation}\label{eq19}
\begin{aligned}	
		CR^{4N-\alpha-\beta} & \ge \Bigl(\int_{B_{2R}} u^\frac{p}{2} \Bigr)^2 \Bigl(\int_{B_{2R} \setminus B_R} u^{\frac{q - 1}{2}}\Bigr)^2\\[0.1in]
		& \ge \Big(\int_{B_{2R} \setminus B_R} u^\frac{p+q-1}{4} \Big)^4\\[0.1in]
		&\ge cR^{4N-(N - \alpha)(p+q-1)},
		\end{aligned}
	\end{equation}
which is a contradiction since $0<p+q-1<\frac{\alpha+\beta}{N-\alpha}$.
	
Assume now \(p + q=\frac{N + \beta}{N - \alpha}\). By H\"older's inequality we find
$$
\begin{aligned}
\Big(\int_{\R^N} (I_\beta \ast u^p) u^q\Big)^2&=\Big(\iint_{\R^N\times \R^N}I_\beta(x-y)u^p(y)u^q(x)\Big)
\Big(\iint_{\R^N\times \R^N}I_\beta(x-y)u^p(x)u^q(y)\Big)\\
&\geq \Big(\iint_{\R^N\times \R^N} u (x)^{\frac{p + q}{2}} I_\beta (x - y) u (y)^{\frac{p + q}{2}}\,dx\,dy\Big)^2,\\
\end{aligned}
$$
so that
$$
\int_{\R^N} (I_\beta \ast u^p) u^q\geq \iint_{\R^N\times \R^N} u (x)^{\frac{p + q}{2}} I_\beta (x - y) u (y)^{\frac{p + q}{2}}.
$$
Using the lower bound \eqref{2low} and the fact that \(\frac{p + q}{2} = \frac{N + \beta}{2(N - \alpha)} > 0\) we deduce
	\[
	\begin{split}
		\int_{\R^N} (I_\beta \ast u^p) u^q & \ge
		\int_{\R^N \setminus B_{1}} \int_{\R^N \setminus B_{1}} u (x)^{\frac{p + q}{2}} I_\beta (x - y) u (y)^{\frac{p + q}{2}}\,dx\,dy\\
		&\ge c\int_{\R^N \setminus B_{1}} \int_{\R^N \setminus B_{1}} \frac{1}{|x|^\frac{N + \beta}{2}} I_\beta(x - y) \frac{1}{|y|^\frac{N + \beta}{2}}\,dx\,dy = \infty.
	\end{split}
	\]
Hence,
\begin{equation}\label{eea1}
\lim_{R\to \infty} \int_{B_R} (I_\beta \ast u^p) u^q=\infty.
\end{equation}
Since $u$ satisfies \eqref{eqC}, from Lemma \ref{eq-quant} with $\theta=\frac{p + q - 1}{4}=\frac{\alpha+\beta}{4(N-\alpha)}$ and $f=(I_\beta\ast u^p)u^q$ we find
$$
\int_{B_{2R} \setminus B_R} u^\frac{p + q - 1}{4}\geq \int_{B_{2R} \setminus B_R} (I_\alpha\ast f)^\frac{p + q - 1}{4}\geq CR^{N-\frac{\alpha+\beta}{4}}\Big(\int_{B_R} (I_\beta\ast u^p)u^q \Big)^{\frac{p+q-1}{4}}.
$$
From the above estimate and \eqref{eea1} we deduce
$$
\lim_{R\to \infty}  \frac{1}{R^{N-\frac{\alpha+\beta}{4}}} \int_{B_{2R} \setminus B_R} u^\frac{p + q - 1}{4}=\infty,
$$
which contradicts  the upper bound in \eqref{eq19}.
\end{proof}

If \(\alpha+\beta \ge N\) we give precise lower bounds on \(\int_{B_{2 R} \setminus B_R} u^{q - 1}\) to obtain a further nonexistence result.

\begin{proposition}\label{p47}
	Let $p,q>0$ and assume that $\alpha+\beta>N$ and $1<q\le\frac{\beta}{N-\alpha}$.
	If \(u \ge 0\) is a solution of \eqref{eqC} then \(u \equiv 0\).
\end{proposition}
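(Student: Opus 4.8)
The plan is to exploit the fact that when $q>1$ the second integral in Corollary \ref{c1} involves a \emph{positive} power of $u$, so that the lower bound $u\ge c|x|^{-(N-\alpha)}$ from \eqref{2low} feeds \emph{into} it rather than obstructing it, and to combine this with the decay estimate \eqref{nonlocal4} and the basic estimate \eqref{eRieszint}. First I would record, as in the proof of Proposition \ref{p6}, the consequence of \eqref{2low} that $\int_{B_{2R}\setminus B_R}u^{\frac{q-1}{2}}\ge cR^{N-(N-\alpha)\frac{q-1}{2}}$, which is a genuine lower bound since $q>1$. Substituting this into \eqref{nonlocal4} gives
\[
\int_{B_{2R}}u^p\le CR^{3N-\alpha-\beta}\Big(\int_{B_{2R}\setminus B_R}u^{\frac{q-1}{2}}\Big)^{-2}\le CR^{3N-\alpha-\beta}\,R^{-2N+(N-\alpha)(q-1)}=CR^{N-\alpha-\beta+(N-\alpha)(q-1)}.
\]
The hypothesis $q\le\frac{\beta}{N-\alpha}$ is exactly what makes the exponent satisfy $N-\alpha-\beta+(N-\alpha)(q-1)\le N-\alpha-\beta+\beta-(N-\alpha)=0$, so we obtain the \emph{upper} bound $\int_{B_{2R}}u^p\le CR^{0}=C$, i.e. $u^p$ is integrable over all of $\R^N$. (When $q<\frac{\beta}{N-\alpha}$ strictly one even gets $\int_{B_{2R}}u^p=o(1)$, forcing $u\equiv0$ directly; the delicate case is the equality $q=\frac{\beta}{N-\alpha}$.)

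Next I would turn this global integrability of $u^p$ into a contradiction with the lower bound \eqref{2low} on $u$ itself. The idea is that $u^p\in L^1(\R^N)$ together with $\alpha+\beta>N$ allows us to replace the crude pointwise bound $I_\beta\ast u^p\ge c(1+|x|)^{-(N-\beta)}$ by the genuinely better bound
\[
I_\beta\ast u^p(x)\ge c|x|^{-(N-\beta)}\int_{B_{|x|}}u^p\ge c|x|^{-(N-\beta)}\cdot\tfrac12\|u^p\|_{L^1}\quad\text{for }|x|\gg1,
\]
using \eqref{Rlow}; crucially, since $\|u^p\|_{L^1}<\infty$ the mass $\int_{B_{|x|}}u^p$ converges to a positive constant, so there is no loss. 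Hence $V:=(I_\beta\ast u^p)u^{q}/u\ge c|x|^{-(N-\beta)}u^{q-1}\ge c|x|^{-(N-\beta)}|x|^{-(N-\alpha)(q-1)}$ on $\R^N\setminus B_1$, by \eqref{2low}. Feeding this into the nonlocal positivity principle, Lemma \ref{lV}, applied with test functions $\varphi_R$ supported in $B_{2R}\setminus B_R$ and equal to $1$ on a thinner annulus, I get
\[
cR^N\ge CR^{\alpha-N}\Big(\int_{B_{2R}\setminus B_R}\sqrt{V}\Big)^2\ge C'R^{\alpha-N}\Big(R^{N}\cdot R^{-\frac{N-\beta}{2}-\frac{(N-\alpha)(q-1)}{2}}\Big)^2=C'R^{\alpha+N-(N-\beta)-(N-\alpha)(q-1)}.
\]
Comparing exponents, this is a contradiction provided $\alpha+N-(N-\beta)-(N-\alpha)(q-1)>N$, i.e. $\alpha+\beta>(N-\alpha)(q-1)$, i.e. $q<1+\frac{\alpha+\beta}{N-\alpha}=\frac{N+\beta}{N-\alpha}$, which is implied by $q\le\frac{\beta}{N-\alpha}<\frac{N+\beta}{N-\alpha}$. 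Thus $u\equiv0$.

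The main obstacle I anticipate is the borderline case $q=\frac{\beta}{N-\alpha}$, where the first step yields only a bounded, not vanishing, integral $\int_{B_R}u^p$; here one must be careful that the convolution lower bound on $I_\beta\ast u^p$ really does retain a fixed positive constant (this is where $\alpha+\beta>N$, equivalently $N-\beta<\alpha<N$, and the finiteness of $\|u^p\|_{L^1}$ are both used), and that the exponent bookkeeping in the final Lemma \ref{lV} step is strict. An alternative route for the final contradiction, avoiding Lemma \ref{lV}, would be to iterate: the improved bound $I_\beta\ast u^p\gtrsim|x|^{-(N-\beta)}$ together with \eqref{eqC} and \eqref{eRieszint} gives a strengthened lower bound on $\int_{B_{2R}\setminus B_R}u^q$ via $u\ge I_\alpha\ast\big((I_\beta\ast u^p)u^q\big)\gtrsim |x|^{-(N-\alpha)}$ bootstrapped against the already-established pointwise lower bound on the integrand — but the positivity-principle argument above is cleaner and matches the paper's stated methodology of relying only on Lemma \ref{lV} and \eqref{eRieszint}.
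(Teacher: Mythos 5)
Your first step is correct and is essentially the paper's argument for the subcritical range: combining the lower bound $\int_{B_{2R}\setminus B_R}u^{\frac{q-1}{2}}\ge cR^{N-(N-\alpha)\frac{q-1}{2}}$ (valid since $q>1$) with \eqref{nonlocal4} yields $\int_{B_{2R}}u^p\le CR^{N-\alpha-\beta+(N-\alpha)(q-1)}$, which forces $u\equiv 0$ when $q<\frac{\beta}{N-\alpha}$ and gives $u^p\in L^1(\R^N)$ when $q=\frac{\beta}{N-\alpha}$. The problem is in your second step, which is supposed to settle the critical case. Your exponent comparison contains an algebra slip: the contradiction requires
\[
\alpha+N-(N-\beta)-(N-\alpha)(q-1)>N,
\qquad\text{i.e.}\qquad (N-\alpha)(q-1)<\alpha+\beta-N,
\qquad\text{i.e.}\qquad q<\tfrac{\beta}{N-\alpha},
\]
not $q<\frac{N+\beta}{N-\alpha}$ as you wrote (you dropped the $N$ when moving it to the other side). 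At $q=\frac{\beta}{N-\alpha}$ one has $(N-\alpha)(q-1)=\alpha+\beta-N$ exactly, so both sides of your final comparison scale like $R^N$ and there is no contradiction. Note also that your positivity-principle step with $V=(I_\beta\ast u^p)u^{q-1}$ bounded below via \eqref{2low} and \eqref{betalow} is just Corollary \ref{c1} combined with the same pointwise lower bounds, so it cannot give more than your first step did; moreover the global integrability $u^p\in L^1$ buys nothing here, since the pointwise bound $I_\beta\ast u^p\ge c(1+|x|)^{-(N-\beta)}$ is already available from \eqref{betalow} without it. The critical case therefore remains open in your proposal.

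To close it you need a genuine logarithmic gain, which is what the paper extracts: when $(N-\alpha)q=\beta$ the integrand $(I_\beta\ast u^p)u^q\gtrsim |x|^{-(N-\beta)}\cdot|x|^{-(N-\alpha)q}=|x|^{-N}$ is exactly non-integrable, so $\int_{B_R}(I_\beta\ast u^p)u^q\ge c\log R$. Feeding this divergence back through \eqref{eqC} and Lemma \ref{eq-quant} with $\theta=\frac{q-1}{2}$ improves the lower bound to
\[
\int_{B_{2R}\setminus B_R}u^{\frac{q-1}{2}}\ \ge\ cR^{\frac{3N-\alpha-\beta}{2}}\bigl(\log R\bigr)^{\frac{q-1}{2}},
\]
whose square exceeds the upper bound $CR^{3N-\alpha-\beta}$ from Corollary \ref{c1} for large $R$ because $q>1$. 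Your "alternative route" paragraph gestures in this direction, but without the quantitative $\log R$ divergence and its conversion through \eqref{eRieszint} the argument does not terminate.
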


\begin{proof}
	Assume that $u > 0$ on a set of positive measure.
	From \eqref{2low} we obtain
	\begin{equation}
		\label{ineqLoweruq_1}
		\left(\int_{B_{2R} \setminus B_R} u^\frac{q - 1}{2}\right)^2 \ge c R^{2N - (N - \alpha)(q - 1)}.
	\end{equation}
	On the other hand, by Corollary \ref{c1},
	\begin{equation}
		\label{ineqLoweruq_2}
	\left(\int_{B_{2R} \setminus B_R} u^\frac{q - 1}{2}\right)^2 \le \frac{C R^{3 N- \beta - \alpha}}{\displaystyle \int_{B_{2R}} u^p} \le C' R^{3 N- \beta - \alpha}.
\end{equation}
	This yields a contradiction if $q < \frac{\beta}{N - \alpha}$.
	
	In the critical case $q = \frac{\beta}{N - \alpha}$ using \eqref{Rlow}, \eqref{betalow} we obtain
	\begin{align*}
		\int_{B_R}  (I_\beta \ast u^p) u^q & \ge c\left( \int_{B_1} u^p \right) \int_{B_R\setminus B_1} \frac{u (x)^q}{|x|^{N - \beta}}dx \\
		&\ge c' \int_{B_R \setminus B_1} \frac{1}{|x|^{N - \beta + (N - \alpha) q}} \,dx =c''\log(R),
	\end{align*}
	since $(N - \alpha) q = \beta$.
	By Lemma \ref{eq-quant} with $\theta=\frac{q-1}{2}>0$,
\begin{equation}\label{eqlog1}	
\begin{aligned}
\int_{B_{2R}\setminus B_R}u^\frac{q-1}{2} & \ge\int_{B_{2R}\setminus B_R} \Big(I_{\alpha}* \big((I_\beta \ast u^p) u^q\big)\Big)^\frac{q-1}{2}\\
& \ge cR^{N-(N-\alpha)\frac{q-1}{2}}\left( \int_{B_{R}}(I_\beta \ast u^p) u^q\right)^{\frac{q-1}{2}}\\
& \ge c'R^{\frac{2N-(N-\alpha)(q-1)}{2}}\log^\frac{q-1}{2}(R)\\
& =c'R^{\frac{3N-\alpha-\beta}{2}}\log^\frac{q-1}{2}(R),
\end{aligned}
\end{equation}
which contradicts \eqref{ineqLoweruq_2}.
\end{proof}

The transitional locally linear case \(q=1\) requires a separate consideration.

\begin{proposition}\label{p8}
	Let $p>0$ and assume that \(\alpha+\beta > N\) and \(q=1\).
	If \(u \ge 0\) is a solution of \eqref{eqC} then \(u \equiv 0\).
\end{proposition}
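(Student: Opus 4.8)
The plan is to argue by contradiction, exploiting the fact that when $q=1$ the integral $\int_{B_{2R}\setminus B_R}u^{\frac{q-1}{2}}$ appearing in Corollary \ref{c1} degenerates into the plain volume of the annulus, so that the remaining estimate becomes a purely quantitative bound on $\int_{B_{2R}}u^p$.

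First I would suppose that $u\ge 0$ is a solution of \eqref{eqC} which is positive on a set of positive measure. As observed right after Lemma \ref{eq-quant}, the elementary bound $I_\alpha*f(x)\ge A_\alpha R^{\alpha-N}\int_{B_R(x)}f$ then forces $u>0$ everywhere in $\R^N$, so that Corollary \ref{c1} is applicable; note also that $u^p\in L^1_{loc}(\R^N)$ by \eqref{eup}. Setting $q=1$ in \eqref{nonlocal4} we have $u^{\frac{q-1}{2}}\equiv 1$, hence $\int_{B_{2R}\setminus B_R}u^{\frac{q-1}{2}}=|B_{2R}\setminus B_R|=cR^N$, and \eqref{nonlocal4} reduces to
$$
cR^{2N}\int_{B_{2R}}u^p\le CR^{3N-\alpha-\beta},
$$
that is, $\int_{B_{2R}}u^p\le C'R^{N-\alpha-\beta}$ for all $R>0$.

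Finally, since $\alpha+\beta>N$ the exponent $N-\alpha-\beta$ is negative, so letting $R\to\infty$ and using monotone convergence yields $\int_{\R^N}u^p=0$, whence $u\equiv 0$ a.e., contradicting the assumed positivity. I do not expect a genuine obstacle here: the only point to notice is that the case $q=1$ collapses the negative-power integral of $u$ into a pure volume term, after which the hypothesis $\alpha+\beta>N$ closes the argument. (Equivalently, one may invoke Proposition \ref{ppq} directly with a cutoff $\varphi_R$ equal to $1$ on $B_{2R}\setminus B_R$ and supported in $B_{4R}\setminus B_{R/2}$, obtaining the same bound $\int_{B_{2R}}u^p\le C'R^{N-\alpha-\beta}$.)
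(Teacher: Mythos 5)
Your proof is correct and follows essentially the same route as the paper: apply Corollary \ref{c1} with $q=1$, so that the negative-power integral degenerates into the volume $cR^N$ of the annulus, which gives $\int_{B_{2R}}u^p\le CR^{N-\alpha-\beta}\to 0$ since $\alpha+\beta>N$, hence $u\equiv 0$.
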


\begin{proof} 
	Using Corollary \ref{c1}, for any \(R>0\) we have
		\[
		R^{3N - \alpha-\beta}\ge c\Bigl(\int_{B_R} u^{p}\Bigr)\Bigl(\int_{B_{2R} \setminus B_R} 1\Bigr)^2=cR^{2N}\Bigl(\int_{B_R} u^{p}\Bigr).
		\]
		Since $\alpha+\beta>N$, it follows that \(u \equiv 0\). 
\end{proof}

In the sublinear case $q<1$ we deduce an additional restriction on the admissible range of the exponent $q$.

\begin{proposition}\label{p9}
	Let $p,q>0$ and assume that \(p+q\ge 1\), \(q < 1\) and
	\[
	q \le 1 - \frac{N - \alpha- \beta}{N}p.
	\]
	If \(u \ge 0\) is a solution of \eqref{eqC} then \(u \equiv 0\).
\end{proposition}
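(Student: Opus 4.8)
The plan is to combine the key integral estimate from Corollary \ref{c1} with the pointwise lower bound \eqref{2low}, exploiting the condition $p+q\ge 1$ so that we can interpolate between the power $u^p$ and the power $u^{(q-1)/2}$. The starting point is \eqref{nonlocal4}, which we apply on the annulus and rewrite in the form
$$
\Bigl(\int_{B_{2R}\setminus B_R}u^p\Bigr)\Bigl(\int_{B_{2R}\setminus B_R}u^\frac{q-1}{2}\Bigr)^2\le\Bigl(\int_{B_{2R}}u^p\Bigr)\Bigl(\int_{B_{2R}\setminus B_R}u^\frac{q-1}{2}\Bigr)^2\le CR^{3N-\alpha-\beta}.
$$
Since $q<1$ the exponent $(q-1)/2$ is negative, so the second factor is bounded below by using $u\le$ (something) — no: actually the correct move is that for $q<1$, H\"older in the reverse direction forces a lower bound. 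I would write $1 = u^{p\lambda}\cdot u^{\frac{q-1}{2}\mu}$ on $B_{2R}\setminus B_R$ for suitable $\lambda,\mu\ge 0$ with $p\lambda+\frac{q-1}{2}\mu=0$ and $\lambda+\mu=1$; solving gives $\lambda=\frac{1-q}{2p+1-q}$ and $\mu=\frac{2p}{2p+1-q}$ (the same split as in the proof of Proposition \ref{p5}), and then H\"older yields
$$
cR^N=\int_{B_{2R}\setminus B_R}1\le\Bigl(\int_{B_{2R}\setminus B_R}u^p\Bigr)^{\lambda}\Bigl(\int_{B_{2R}\setminus B_R}u^\frac{q-1}{2}\Bigr)^{\mu}.
$$

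Next I would feed in \eqref{nonlocal4} to eliminate the factor involving the negative power. Raising the last inequality to the power $\frac{2p+1-q}{2p}$ and isolating $\int u^{(q-1)/2}$, or more directly combining the two displayed inequalities, we obtain (as in the computation in Proposition \ref{p5})
$$
cR^N\le\Bigl(\int_{B_{2R}\setminus B_R}u^p\Bigr)^{\frac{1-q}{2p+1-q}-\frac{p}{2p+1-q}}\bigl(CR^{3N-\alpha-\beta}\bigr)^{\frac{p}{2p+1-q}}
=\Bigl(\int_{B_{2R}\setminus B_R}u^p\Bigr)^{\frac{1-q-p}{2p+1-q}}\bigl(CR^{3N-\alpha-\beta}\bigr)^{\frac{p}{2p+1-q}}.
$$
Now the exponent $\frac{1-q-p}{2p+1-q}$ is $\le 0$ because $p+q\ge 1$, so this reads as a \emph{lower} bound on $\int_{B_{2R}\setminus B_R}u^p$:
$$
\int_{B_{2R}\setminus B_R}u^p\ge c\,R^{(N-\frac{p(3N-\alpha-\beta)}{2p+1-q})\cdot\frac{2p+1-q}{1-q-p}}=c\,R^{\frac{N(2p+1-q)-p(3N-\alpha-\beta)}{1-q-p}}.
$$
(When $p+q=1$ the exponent blows up and one argues slightly differently, using that the middle factor is $1$ and so $cR^N\le(CR^{3N-\alpha-\beta})^{1/2}$ fails; I will treat this borderline case by a direct inspection, or by noting $q<1$ forces $p>0$ strictly and the inequality $p+q\ge 1$ may be taken strict after the boundary case is excluded — alternatively absorb it by continuity.)

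Finally I would confront this lower bound on $\int_{B_{2R}\setminus B_R}u^p$ with the summability constraint \eqref{eup}, namely $u^p\in L^1((1+|x|)^{-(N-\beta)}dx)$, which by dominated convergence gives $\int_{B_{2R}\setminus B_R}u^p=o(R^{N-\beta})$ as $R\to\infty$ (since $\beta<N$). This produces a contradiction precisely when the exponent $\frac{N(2p+1-q)-p(3N-\alpha-\beta)}{1-q-p}$ is $\ge N-\beta$. Simplifying the numerator, $N(2p+1-q)-p(3N-\alpha-\beta)=-Np+N-Nq+p\alpha+p\beta=N(1-q)-p(N-\alpha-\beta)$, so the exponent equals $\frac{N(1-q)-p(N-\alpha-\beta)}{1-(p+q)}$. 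The hypothesis $q\le 1-\frac{N-\alpha-\beta}{N}p$ is exactly $N(1-q)\ge p(N-\alpha-\beta)$, i.e. the numerator is $\ge 0$; combined with the denominator $1-(p+q)\le 0$, the exponent is $\le 0$, which certainly is $\le N-\beta$ — wait, that gives $o(R^{N-\beta})\ge cR^{\le 0}$, which is true, not a contradiction. So I expect the genuine argument to run through a sharper summability bound: one should instead use \emph{both} that $\int_{B_{2R}\setminus B_R}u^p$ is bounded (when $p+q>1$ strictly the decay forces it to $0$) and match it against the lower bound, the contradiction coming from the sign of the exponent. \textbf{The main obstacle} is getting the bookkeeping of exponents exactly right in the borderline-of-borderline regime $p+q=1$ versus $p+q>1$, and in particular identifying whether the decisive clash is with \eqref{eup} or with the trivial bound $\int_{B_{2R}\setminus B_R}u^p\le\int_{\R^N}u^p(1+|x|)^{-(N-\beta)}\,dx\cdot(1+R)^{N-\beta}$ combined with a competing \emph{power} of $R$; I anticipate the clean contradiction is: the lower bound has a positive power of $R$ (one must recheck the sign, since with $1-(p+q)<0$ and numerator $\le 0$ we do get a positive exponent), which then contradicts $o(R^{N-\beta})$ once one verifies this positive exponent is $>N-\beta$ — and this last verification is precisely where the hypotheses $q<1$ and $q\le 1-\frac{N-\alpha-\beta}{N}p$ are both used.
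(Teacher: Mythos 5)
Your opening is on track: the H\"older split with exponents $\lambda=\frac{1-q}{2p+1-q}$, $\mu=\frac{2p}{2p+1-q}$ and the input from Corollary \ref{c1} are exactly the right starting points. But the way you then regroup the factors leads to a dead end, and the step where you ``isolate'' $\int u^p$ contains a direction error. From
$$
cR^N\le\Bigl(\int_{B_{2R}\setminus B_R}u^p\Bigr)^{\frac{1-q-p}{2p+1-q}}\bigl(CR^{3N-\alpha-\beta}\bigr)^{\frac{p}{2p+1-q}},
$$
the exponent $\frac{1-q-p}{2p+1-q}$ is \emph{nonpositive}, so solving for the integral flips the inequality and yields an \emph{upper} bound, $\int_{B_{2R}\setminus B_R}u^p\le CR^{e}$ with $e=\frac{N(1-q)-p(N-\alpha-\beta)}{1-(p+q)}\le 0$ under the stated hypotheses --- not the lower bound you assert. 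An upper bound that merely says the annular integrals of $u^p$ are bounded (or tend to zero) is not contradictory: the only available lower bound is $\int_{B_{2R}\setminus B_R}u^p\ge cR^{N-p(N-\alpha)}$ from \eqref{2low}, whose exponent is negative whenever $p>\frac{N}{N-\alpha}$, a case the proposition must cover; and confronting it with \eqref{eup}, which is itself an upper bound, produces no clash, as you yourself concede at the end. So the argument does not close.

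The regrouping that works is different: keep the product $\bigl[(\int u^p)(\int u^{\frac{q-1}{2}})^2\bigr]^{\frac{1-q}{2p+1-q}}$ intact and put the leftover entirely on the negative-power factor, i.e. write the H\"older right-hand side as
$$
\Bigl[\Bigl(\int_{B_{2R}\setminus B_R} u^p\Bigr)\Bigl(\int_{B_{2R}\setminus B_R} u^{\frac{q-1}{2}}\Bigr)^2\Bigr]^{\frac{1-q}{2p+1-q}}\Bigl(\int_{B_{2R}\setminus B_R} u^{\frac{q-1}{2}}\Bigr)^{2\frac{p+q-1}{2p+1-q}}.
$$
Both factors can now be bounded \emph{above}: the bracket by $CR^{3N-\alpha-\beta}$ via \eqref{nonlocal4}, and the second factor --- whose exponent is nonnegative precisely because $p+q\ge1$, which is where that hypothesis enters --- by $(C'R^{3N-\alpha-\beta})^{\frac{p+q-1}{2p+1-q}}$, using $(\int_{B_{2R}\setminus B_R}u^{\frac{q-1}{2}})^2\le CR^{3N-\alpha-\beta}/\int_{B_{2R}}u^p\le C'R^{3N-\alpha-\beta}$ (only a positive constant lower bound on $\int_{B_{2R}}u^p$ is needed). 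This gives $cR^N\le CR^{(3N-\alpha-\beta)\frac{p}{2p+1-q}}$, a contradiction exactly when $q<1-\frac{N-\alpha-\beta}{N}p$. Finally, your proposal has no treatment of the equality case $q=1-\frac{N-\alpha-\beta}{N}p$, which is the genuine borderline (the case $p+q=1$ you worry about is harmless: there the second factor has exponent $0$ and the strict argument still gives $cR^N\le CR^{N-\frac{\alpha+\beta}{3}}$, while equality in the $q$-condition is incompatible with $p+q=1$). In the equality case one must first show via a reverse H\"older inequality that $\int_{B_{2R}\setminus B_R}u^p\ge c>0$, hence $\int_{B_{2R}}u^p\to\infty$, which upgrades the bound on $(\int_{B_{2R}\setminus B_R}u^{\frac{q-1}{2}})^2$ to $o(R^{3N-\alpha-\beta})$ and restores the contradiction.
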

\begin{proof}
		Since $q<1$,  by H\"older's inequality we deduce		
		\begin{equation}\label{ee1}
		\begin{aligned}
			cR^N=\int_{B_{2R} \setminus B_R} 1 &= \int_{B_{2R} \setminus B_R} u^{p\frac{1-q}{2p+1-q}} \, u^{\frac{q-1}{2}\frac{2p}{2p+1-q}}\\
			&\le \Bigl( \int_{B_{2R} \setminus B_R} u^p \Bigr)^\frac{1 - q}{2p + 1 - q}\Bigl( \int_{B_{2R} \setminus B_R} u^\frac{q - 1}{2} \Bigr)^\frac{2p}{2p + 1 - q}\\
			&=\biggl[\Bigl( \int_{B_{2R} \setminus B_R} u^p \Bigr)\Bigl( \int_{B_{2R} \setminus B_R} u^\frac{q - 1}{2} \Bigr)^2\biggr]^\frac{1 - q}{2p + 1 - q}\biggl(\int_{B_{2R} \setminus B_R} u^\frac{q - 1}{2} \biggr)^{2\frac{p + q - 1}{2p + 1 - q}}.
		\end{aligned}
		\end{equation}
		By Corollary \ref{c1} we have
$$
\left(\int_{B_{2R}}u^p\right)\left(\int_{B_{2R}\setminus B_R} u^\frac{q-1}{2}\right)^2 \leq CR^{3N-\alpha-\beta},
$$
which yields
\begin{equation}\label{ee2}
		\Bigl(\int_{B_{2R} \setminus B_R} u^p \Bigr)\Bigl( \int_{B_{2R} \setminus B_R} u^\frac{q - 1}{2} \Bigr)^2 \le C R^{3N - \alpha-\beta},
\end{equation}
		and on the other hand
\begin{equation}\label{ee3}
		\Bigl( \int_{B_{2R} \setminus B_R} u^\frac{q - 1}{2}\Bigr)^2 \le C \frac{R^{3N - \beta - \alpha}}{\displaystyle \int_{B_{2R}} u^p} \le C' R^{3N -\alpha-\beta}.
\end{equation}

If $q < 1 - \frac{N - \beta - \alpha}{N}p$, we use \eqref{ee2}-\eqref{ee3} in  \eqref{ee1} to raise a contradiction since \(p + q \ge 1\).
	\smallskip
	
If \(q = 1 - \frac{N - \beta - \alpha}{N}p\), we use \eqref{ee3} and H\"older's inequality to further estimate
	\[
	\int_{B_{2R} \setminus B_R} u^p \ge \frac{\Bigl(\displaystyle{\int_{B_{2R} \setminus B_R} 1}\Bigr)^{1 + \frac{2p}{1 - q}}} {\displaystyle{\Bigl(\int_{B_{2R} \setminus B_R} u^\frac{1 - q}{2} }\Bigr)^\frac{2p}{1 - q} } \ge c R^{N-\frac{p}{1 - q} (N -\alpha- \beta)}= c.
	\]
	This shows that $u^p\not\in L^1(\R^N)$ and
	\[
	\lim_{R \to \infty} \int_{B_{2R}} u^p = \infty.
	\]
Using this fact in \eqref{ee3} we deduce
	\[
	\Big(\int_{B_{2R} \setminus B_R} u^\frac{q - 1}{2}\Big)^2 = o(R^{3 N-\alpha-\beta}) \quad\mbox{ as }R\to \infty.
	\]
	We now use this last estimate and \eqref{ee2} in \eqref{ee1} to conclude.
\end{proof}

\section{Optimal decay and existence}\label{odec}

If \(u\ge 0\) is a solution of \eqref{eqC}, then either \(u \equiv 0\) or
$u$ must obey the "natural'' lower bound \eqref{2low}, which implies in particular, the integral lower bound
\begin{equation}\label{2lowint}
\int_{B_{2R} \setminus B_R}u\ge cR^{\alpha}.
\end{equation}
In the region $q<1$ the estimate \eqref{nonlocal4} of Corollary \ref{c1} leads to an integral lower bound which improves upon \eqref{2lowint} when  $\alpha+\beta<N$ and $q < \frac{\beta}{N - \alpha}$.

\begin{proposition}
	\label{p10}
	Let $p,q>0$ and assume that $\alpha+\beta<N$ and $q < \frac{\beta}{N - \alpha}{<1}$.
	If \(u\ge 0\) is a solution of \eqref{eqC}, then either \(u \equiv 0\) or
	\begin{equation}\label{qlowint}
		\int_{B_{2R} \setminus B_R}u\ge cR^{\frac{\alpha+\beta-Nq}{1-q}}.
	\end{equation}
\end{proposition}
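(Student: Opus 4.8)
The plan is to combine the decay estimate \eqref{nonlocal4} of Corollary \ref{c1} with one carefully tuned application of H\"older's inequality on the dyadic annulus $B_{2R}\setminus B_R$. Assume $u\not\equiv0$; then, as recorded just before \eqref{2low}, $u>0$ a.e.\ in $\R^N$ and the lower bound \eqref{2low} holds, so in particular $u\ge c_1>0$ on $B_1$ and hence
\[
\int_{B_{2R}}u^p\ \ge\ \int_{B_1}u^p\ \ge\ c_1^{\,p}|B_1|\ =:\ c_0\ >\ 0\qquad\text{for all }R\ge\tfrac12 .
\]
This uniform (in $R$) positive lower bound, together with the fact that $u>0$ a.e., is the only way the non-triviality of $u$ enters the argument.

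Since $0<q<1$, I would set $\lambda:=\frac{1-q}{3-q}\in(0,1)$ and apply H\"older's inequality to the pointwise factorisation $1=u^{\lambda}\cdot u^{-\lambda}$ on $B_{2R}\setminus B_R$, with the conjugate exponents $r=\tfrac1\lambda=\tfrac{3-q}{1-q}$ and $r'=\tfrac{3-q}{2}$. Since $-\lambda r'=\tfrac{q-1}{2}$ — which is precisely why $\lambda$ must be chosen this way — evaluating the two factors gives
\[
cR^N=\int_{B_{2R}\setminus B_R}1\ \le\ \Bigl(\int_{B_{2R}\setminus B_R}u\Bigr)^{\frac{1-q}{3-q}}\Bigl(\int_{B_{2R}\setminus B_R}u^{\frac{q-1}{2}}\Bigr)^{\frac{2}{3-q}} .
\]

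Next I would bring in Corollary \ref{c1}: from \eqref{nonlocal4} and $\int_{B_{2R}}u^p\ge c_0$ one gets $\bigl(\int_{B_{2R}\setminus B_R}u^{\frac{q-1}{2}}\bigr)^2\le C R^{3N-\alpha-\beta}$, hence $\bigl(\int_{B_{2R}\setminus B_R}u^{\frac{q-1}{2}}\bigr)^{\frac{2}{3-q}}\le C'R^{\frac{3N-\alpha-\beta}{3-q}}$. Substituting this into the previous display and solving for $\int_{B_{2R}\setminus B_R}u$ yields
\[
\Bigl(\int_{B_{2R}\setminus B_R}u\Bigr)^{\frac{1-q}{3-q}}\ \ge\ c\,R^{\,N-\frac{3N-\alpha-\beta}{3-q}}\ =\ c\,R^{\frac{\alpha+\beta-Nq}{3-q}} ,
\]
and raising both sides to the positive power $\frac{3-q}{1-q}$ gives exactly \eqref{qlowint}.

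I do not anticipate a genuine obstacle here: the whole argument hinges on identifying the exponent $\lambda=\frac{1-q}{3-q}$, which is forced by the requirement that its H\"older-dual power equal the exponent $\tfrac{q-1}{2}$ occurring in \eqref{nonlocal4}; the rest is the bookkeeping identity $N-\frac{3N-\alpha-\beta}{3-q}=\frac{\alpha+\beta-Nq}{3-q}$. It is worth noting that only $q<1$ and $u\not\equiv0$ are actually used in the estimate; the standing hypotheses $\alpha+\beta<N$ and $q<\frac{\beta}{N-\alpha}$ serve merely to ensure that $\frac{\alpha+\beta-Nq}{1-q}$ is positive and strictly larger than $\alpha$, i.e.\ that \eqref{qlowint} genuinely improves \eqref{2lowint}.
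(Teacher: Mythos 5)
Your proof is correct and follows essentially the same route as the paper's: both rest on Corollary \ref{c1} combined with the H\"older inequality $\bigl(\int_{B_{2R}\setminus B_R}1\bigr)^{3-q}\le\bigl(\int_{B_{2R}\setminus B_R}u\bigr)^{1-q}\bigl(\int_{B_{2R}\setminus B_R}u^{\frac{q-1}{2}}\bigr)^{2}$, i.e.\ exactly your choice of exponents $r=\frac{3-q}{1-q}$, $r'=\frac{3-q}{2}$. The only difference is cosmetic bookkeeping (you solve for $\int u$ before raising to the power $3-q$), and your explicit justification of $\int_{B_{2R}}u^p\ge c_0$ via \eqref{2low} is a point the paper leaves implicit.
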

As pointed out in Remark~\ref{remark01}, since $\alpha+\beta<N$ and $q < \frac{\beta}{N - \alpha}<1$, the exponent of $R$ in \eqref{qlowint} is greater than $\alpha$.

\begin{proof}
	From Corollary \ref{c1} we have
	\begin{equation*}
	\Big(\int_{B_{2R} \setminus B_R} u^{-\frac{1-q}{2}}\Big)^2 \le c\frac{R^{3N - \alpha-\beta}}{\displaystyle \int_{B_{2R}} u^{p}} \le c' R^{3N - \alpha-\beta}.
\end{equation*}
Further, by H\"older's inequality (since $0<q<1$) we have 	
	\begin{equation*}
		\Big(\int_{B_{2R} \setminus B_R} u^{-\frac{1-q}{2}}\Big)^2\ge \Big(\int_{B_{2R} \setminus B_R} u\Big)^{-(1-q)}\Big(\int_{B_{2R} \setminus B_R} 1\Big)^{3-q}.
	\end{equation*}
Now, the above estimates yield
\begin{equation*}
	\Big(\int_{B_{2R} \setminus B_R} u\Big)^{-(1-q)}\le R^{Nq-\alpha-\beta},
\end{equation*}
which leads to \eqref{qlowint}.
\end{proof}

Our next step is to construct explicit solutions with the decay which match or near-match the lower bounds in \eqref{2lowint} and \eqref{qlowint}. Before we do this, we recall the following simple estimates, cf. \cite[Lemma A.1 and A.2]{MVS-JDE} which are frequently used in the proofs below.

\begin{lemma}\label{A1}
	Let \(v\in L^1_{loc}(\R^N)\), \(\gamma \in (0, N)\) and \(s > \gamma\).
	If
	\[
	\limsup_{|x| \to \infty} v(x) |x|^s <\infty,
	\]
	then
	\begin{align*}
		\limsup_{|x| \to \infty} (I_\gamma \ast v)(x)|x|^{s-\gamma} &<\infty& & \text{if \(\gamma<s<N\)},\\
		\limsup_{|x| \to \infty} (I_\gamma \ast v)(x) \frac{|x|^{N - \gamma}}{\log |x|} &<\infty&
		&\text{if \(s=N\)},\bigskip\\
		\limsup_{|x| \to \infty} (I_\gamma \ast v)(x)|x|^{N - \gamma} &<\infty& & \text{if \(s>N\)}.
	\end{align*}
\end{lemma}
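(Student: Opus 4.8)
The plan is to estimate the convolution $(I_\gamma\ast v)(x)$ directly, by splitting $\R^N$ into four regions according to the position of $y$ relative to $x$, and then letting $|x|\to\infty$. Since $|I_\gamma\ast v|\le I_\gamma\ast|v|$ and the hypothesis on $v$ is inherited by $|v|$, I would first reduce to the case $v\ge 0$. The assumption $\limsup_{|x|\to\infty}v(x)|x|^s<\infty$ gives constants $C_0,R_0>0$ with $v(y)\le C_0|y|^{-s}$ for all $|y|\ge R_0$, while $v\in L^1(B_{R_0})$ because $v\in L^1_{loc}(\R^N)$; since $s>\gamma$, these two facts yield $v\in L^1\big((1+|y|)^{-(N-\gamma)}dy,\R^N\big)$, so by Lemma \ref{leq} the potential $I_\gamma\ast v$ is finite a.e. Then, fixing $x$ with $R:=|x|>2R_0$, I write
\[
(I_\gamma\ast v)(x)=A_\gamma\int_{\R^N}\frac{v(y)}{|x-y|^{N-\gamma}}\,dy=:J_1+J_2+J_3+J_4,
\]
where $J_1,J_2,J_3,J_4$ are the integrals over $\{|y|\le R_0\}$, $\{R_0\le|y|\le R/2\}$, $\{R/2\le|y|\le 2R\}$ and $\{|y|\ge 2R\}$ respectively.

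The estimates are then elementary. On the domains of $J_1$ and $J_2$ one has $|x-y|\ge R/2$, so $J_1\le CR^{-(N-\gamma)}\int_{B_{R_0}}v=O(R^{-(N-\gamma)})$, and $J_2\le CR^{-(N-\gamma)}\int_{R_0\le|y|\le R/2}|y|^{-s}\,dy$, where the last integral is a constant times $\int_{R_0}^{R/2}r^{N-1-s}\,dr$, hence $O(R^{N-s})$, $O(\log R)$ or $O(1)$ according to whether $s<N$, $s=N$ or $s>N$. In the diagonal region $J_3$ I use $v(y)\le C_0(R/2)^{-s}$ together with $\int_{R/2\le|y|\le 2R}|x-y|^{-(N-\gamma)}\,dy\le\int_{|z|\le 3R}|z|^{-(N-\gamma)}\,dz=cR^{\gamma}$, which gives $J_3\le CR^{\gamma-s}$. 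Finally, in the far region $J_4$ one has $|x-y|\ge|y|/2$, so $J_4\le C\int_{|y|\ge 2R}|y|^{-s-(N-\gamma)}\,dy=c\int_{2R}^{\infty}r^{\gamma-1-s}\,dr=cR^{\gamma-s}$.

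Collecting these bounds and recalling $R=|x|$: if $\gamma<s<N$, every term is $O(R^{-(s-\gamma)})$ (note that $s<N$ makes $R^{-(N-\gamma)}=O(R^{-(s-\gamma)})$), which is the first assertion; if $s=N$, the dominant term is $J_2=O(R^{-(N-\gamma)}\log R)$, while $J_1,J_3,J_4=O(R^{-(N-\gamma)})$, which is the second assertion; and if $s>N$, then $J_3,J_4=O(R^{\gamma-s})=o(R^{-(N-\gamma)})$ and $J_1,J_2=O(R^{-(N-\gamma)})$, which is the third assertion. The only genuinely delicate points are the treatment of the diagonal region $J_3$, where keeping $\gamma>0$ is exactly what makes $\int_{|z|\le 3R}|z|^{-(N-\gamma)}\,dz$ finite, and of the tail $J_4$, whose convergence is precisely equivalent to $s>\gamma$; the trichotomy in the conclusion is produced entirely by the behaviour of the elementary integral $\int_{R_0}^{R/2}r^{N-1-s}\,dr$ appearing in $J_2$.
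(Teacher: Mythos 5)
Your proof is correct, and it is essentially the canonical argument: the paper itself does not prove Lemma \ref{A1} but imports it from \cite[Lemma A.1]{MVS-JDE}, where the same near/far decomposition around the singularity and around infinity is used, with the trichotomy coming exactly from the elementary radial integral in your $J_2$. All four regional estimates check out ($|x-y|\ge R/2$ on the supports of $J_1,J_2$, the shift $z=x-y$ with $|z|\le 3R$ for $J_3$, and $|x-y|\ge|y|/2$ for $J_4$, whose convergence uses $s>\gamma$), and the bookkeeping in the three cases is right. One cosmetic caveat: the reduction to $v\ge 0$ tacitly reads the hypothesis as $\limsup |v(x)|\,|x|^s<\infty$ (a one-sided bound on $v$ would not control $I_\gamma*|v|$); this is harmless here since every application in the paper has $v\ge 0$, but it is worth stating explicitly.
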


\begin{lemma}\label{A2}
	Let \(v\in L^1_{loc}(\R^N)\), \(\gamma \in (0, N)\) and \(\sigma\in\R\).
	If
	\[
	\limsup_{|x| \to \infty} v(x) \frac{|x|^N}{(\log |x|)^\sigma} <\infty,
	\]
	then
	\begin{align*}
		\limsup_{|x| \to \infty} (I_\gamma \ast v)(x) |x|^{N - \gamma} &<\infty & &\text{if \(\sigma < -1\)},\\
		\limsup_{|x| \to \infty} (I_\gamma \ast v)(x) \frac{|x|^{N - \gamma}}{(\log (\log |x|))} &<\infty & &\text{if \(\sigma = -1\)},\\
		\limsup_{|x| \to \infty} (I_\gamma \ast v)(x) \frac{|x|^{N - \gamma}}{(\log |x|)^{\sigma+1}} &<\infty & &\text{if \(\sigma > -1\)}.
	\end{align*}
\end{lemma}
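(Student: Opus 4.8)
The statement to prove is Lemma \ref{A2}, a standard asymptotic estimate for Riesz potentials of functions with borderline decay $|x|^{-N}(\log|x|)^\sigma$. The plan is to reduce everything to a direct estimate of the convolution integral split into three regions according to the size of $|y|$ relative to $|x|$.

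\medskip

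\textbf{Setup and strategy.} Fix $x$ with $|x|=:r$ large. Write $I_\gamma\ast v(x)=A_\gamma\int_{\R^N}|x-y|^{-(N-\gamma)}v(y)\,dy$ and, using the hypothesis $v(y)\le C|y|^{-N}(\log|y|)^\sigma$ for $|y|\ge 2$ (the contribution of $|y|\le 2$ is $O(|x|^{-(N-\gamma)})$ by $v\in L^1_{loc}$ and can be absorbed since $N-\gamma>N-\gamma$ is not smaller than the claimed exponents up to logs — more precisely it is dominated by the "far" term below), split the remaining integral over $\{2\le|y|\le r/2\}$, $\{r/2<|y|<2r\}$, and $\{|y|\ge 2r\}$. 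In the inner region $|x-y|\ge r/2$, so that piece is $\lesssim r^{-(N-\gamma)}\int_{2\le|y|\le r/2}|y|^{-N}(\log|y|)^\sigma dy \asymp r^{-(N-\gamma)}\int_2^{r/2}t^{-1}(\log t)^\sigma dt$. In the far region $|x-y|\ge|y|/2$, so that piece is $\lesssim \int_{|y|\ge 2r}|y|^{-(2N-\gamma)}(\log|y|)^\sigma dy \asymp \int_{2r}^\infty t^{\gamma-N-1}(\log t)^\sigma dt \asymp r^{\gamma-N}(\log r)^\sigma$, which is always $\lesssim r^{-(N-\gamma)}(\log r)^{\sigma+1}$ (and better when $\sigma<-1$). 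In the middle region, $|y|\asymp r$, so $(\log|y|)^\sigma\asymp(\log r)^\sigma$ and $|y|^{-N}\asymp r^{-N}$, leaving $\lesssim r^{-N}(\log r)^\sigma\int_{r/2<|y|<2r}|x-y|^{-(N-\gamma)}dy$; since $\gamma>0$ the singularity is integrable and $\int_{|x-y|<4r}|x-y|^{-(N-\gamma)}dy\asymp r^\gamma$, giving $\lesssim r^{\gamma-N}(\log r)^\sigma$, again subdominant.

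\medskip

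\textbf{The decisive region.} The outcome is governed entirely by the inner-region bound $r^{-(N-\gamma)}\int_2^{r/2}t^{-1}(\log t)^\sigma\,dt$, and here the trichotomy in $\sigma$ enters through the elementary antiderivative of $t^{-1}(\log t)^\sigma$. If $\sigma<-1$ the integral $\int_2^\infty t^{-1}(\log t)^\sigma dt$ converges, so this term is $\lesssim r^{-(N-\gamma)}$, matching the first claim. If $\sigma=-1$, $\int_2^{r/2}t^{-1}(\log t)^{-1}dt=\log\log(r/2)-\log\log 2\asymp \log\log r$, giving $\lesssim r^{-(N-\gamma)}\log\log r$, the second claim. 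If $\sigma>-1$, $\int_2^{r/2}t^{-1}(\log t)^\sigma dt\asymp(\log r)^{\sigma+1}$, giving $\lesssim r^{-(N-\gamma)}(\log r)^{\sigma+1}$, the third claim. Combining with the middle- and far-region bounds (subdominant in every case) and the local term yields the three assertions.

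\medskip

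\textbf{Anticipated obstacle.} There is no deep obstacle; the work is purely in bookkeeping the region splitting and the logarithmic integrals. The one point requiring a little care is verifying that the local contribution near the origin and the middle/far contributions are genuinely dominated by the inner term in \emph{all three} regimes of $\sigma$ — in particular that $r^{\gamma-N}(\log r)^\sigma$ is absorbed by $r^{\gamma-N}(\log r)^{\sigma+1}$ when $\sigma>-1$, by $r^{\gamma-N}\log\log r$ when $\sigma=-1$, and by $r^{\gamma-N}$ when $\sigma<-1$ (the last using $(\log r)^\sigma\to 0$). Alternatively, since this is a known estimate, I would simply cite \cite[Lemma A.2]{MVS-JDE}, whose proof proceeds exactly along these lines; the computation for general $\gamma\in(0,N)$ is identical to the case treated there and we reproduce it only if completeness demands it.
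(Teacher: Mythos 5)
Your proof is correct and is exactly the standard argument: the paper itself gives no proof of Lemma \ref{A2}, merely citing \cite[Lemma A.2]{MVS-JDE}, and the three-region splitting with the trichotomy coming from $\int_2^{r/2} t^{-1}(\log t)^\sigma\,dt$ is precisely the computation behind that reference, carried over verbatim to general $\gamma\in(0,N)$. The only (trivial) point to tidy is that the hypothesis gives the bound $v(y)\le C|y|^{-N}(\log|y|)^\sigma$ only for $|y|\ge R_0$ with some $R_0$ rather than for $|y|\ge 2$; the intermediate annulus $\{2\le|y|\le R_0\}$ is handled exactly like the local term via $v\in L^1_{loc}$.
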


\begin{proposition}\label{sharp-B1}
	Assume that
	\begin{equation}\label{cdd1}
		p > \frac{\beta}{N - \alpha},\quad
		p + q > \frac{N + \beta}{N - \alpha}\quad
		\text{and}\quad
		q > \frac{\beta}{N - \alpha}.
	\end{equation}
	Then, \eqref{eqC} admits a positive radial solution $u \in C(\R^N)$ which satisfies
	\begin{equation}\label{ulim}
	\limsup_{|x| \to \infty} u (x) |x|^{N - \alpha} <\infty.
	\end{equation}
\end{proposition}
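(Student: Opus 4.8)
The plan is to look for an explicit radial supersolution of the form $u(x) = (1+|x|^2)^{-(N-\alpha)/2}$, or equivalently $u(x)\asymp (1+|x|)^{-(N-\alpha)}$, which is the slowest admissible decay according to the lower bound \eqref{2low}, and to verify that it satisfies \eqref{eqC} for an appropriate multiplicative constant once \eqref{cdd1} holds. The key point is to estimate the right-hand side of \eqref{eqC} step by step, passing through the two Riesz potentials, using Lemmas \ref{A1} and \ref{A2} (and, near the origin, simply that Riesz potentials of bounded compactly-decaying functions are bounded on compacts).

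First I would compute the decay of $u^p$: since $u^p(x)\asymp(1+|x|)^{-p(N-\alpha)}$ and the first hypothesis in \eqref{cdd1} says $p(N-\alpha)>\beta$, the function $u^p$ decays strictly faster than $|x|^{-\beta}$. I would then split into the three cases of Lemma \ref{A1} with $\gamma=\beta$ and $s=p(N-\alpha)$: if $\beta<p(N-\alpha)<N$ then $I_\beta\ast u^p\asymp |x|^{-(p(N-\alpha)-\beta)}$ up to a constant; if $p(N-\alpha)=N$ there is a logarithmic correction $|x|^{-(N-\beta)}\log|x|$; and if $p(N-\alpha)>N$ then $I_\beta\ast u^p\asymp|x|^{-(N-\beta)}$. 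In every case $I_\beta\ast u^p(x)\le C(1+|x|)^{-\tau}$ with $\tau:=\min\{p(N-\alpha)-\beta,\,N-\beta\}$ (absorbing the log into an arbitrarily small loss of exponent, or handling it via Lemma \ref{A2} at the next stage). Multiplying by $u^q(x)\asymp(1+|x|)^{-q(N-\alpha)}$ gives $(I_\beta\ast u^p)u^q(x)\le C(1+|x|)^{-s'}$ with $s'=\tau+q(N-\alpha)$. The second hypothesis $p+q>\frac{N+\beta}{N-\alpha}$ is exactly $p(N-\alpha)-\beta+q(N-\alpha)>N-\alpha$, i.e. $s'>N-\alpha$ in the first subcase, and the third hypothesis $q(N-\alpha)>\beta$ gives $N-\beta+q(N-\alpha)>N-\alpha$, i.e. $s'>N-\alpha$ in the remaining subcases; so in all cases $s'>N-\alpha$. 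Applying Lemma \ref{A1} once more with $\gamma=\alpha$ and $s=s'>N-\alpha$ yields
\[
I_\alpha\ast\big((I_\beta\ast u^p)u^q\big)(x)\le C(1+|x|)^{-\min\{s'-\alpha,\,N-\alpha\}}\le C(1+|x|)^{-(N-\alpha)},
\]
with the same logarithmic caveat handled by Lemma \ref{A2} when $s'=N$ (the case $\sigma<-1$ or a slightly reduced exponent). Near the origin both nested potentials are finite and continuous since $u^p$ is bounded and $(I_\beta\ast u^p)u^q$ is locally integrable against $|x-y|^{-(N-\alpha)}$, so the right-hand side is a bounded continuous function on $\R^N$ that decays like $(1+|x|)^{-(N-\alpha)}$. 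Hence there is $C_0>0$ with $I_\alpha\ast\big((I_\beta\ast u^p)u^q\big)\le C_0\,u$ on $\R^N$; replacing $u$ by $\lambda u$ for small $\lambda>0$ scales the right-hand side by $\lambda^{p+q}$ and the left by $\lambda$, and since $p+q>1$ (which follows from \eqref{cdd1}, as $\frac{N+\beta}{N-\alpha}>1$) we can choose $\lambda$ so small that $\lambda^{p+q-1}C_0\le 1$, giving a genuine solution of \eqref{eqC}. Condition \eqref{eqCdom} is automatic from the boundedness just established. Finally \eqref{ulim} holds by construction.

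The step I expect to be the main obstacle is the bookkeeping of the borderline cases $p(N-\alpha)=N$ and $s'=N$, where Lemma \ref{A1} produces logarithmic factors: one must check that the hypotheses \eqref{cdd1} are strict enough that after composing the potentials the logarithm is either killed outright (via the $\sigma<-1$ branch of Lemma \ref{A2}) or can be absorbed by trading an arbitrarily small amount of polynomial decay, so that the final bound is still $O((1+|x|)^{-(N-\alpha)})$ rather than $O(|x|^{-(N-\alpha)}\log|x|)$, the latter being incompatible with \eqref{2low} up to constants but still an admissible supersolution after rescaling — so in fact even the logarithmic bound suffices for existence, and the only real care needed is to confirm $s'>N-\alpha$ strictly in every subcase, which is precisely what the strict inequalities in \eqref{cdd1} deliver.
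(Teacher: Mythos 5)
Your strategy is exactly the paper's: take $u(x)=(1+|x|)^{-(N-\alpha)}$, push it through Lemma \ref{A1} twice (with an arbitrarily small loss of exponent to absorb the logarithm in the borderline case $p(N-\alpha)=N$), and rescale using $p+q>1$. There is, however, an algebra slip at the decisive step. The hypothesis $p+q>\frac{N+\beta}{N-\alpha}$ reads $(p+q)(N-\alpha)-\beta>N$, and the hypothesis $q>\frac{\beta}{N-\alpha}$ reads $N-\beta+q(N-\alpha)>N$; you translate both only as ``$s'>N-\alpha$''. That weaker condition does not support your final display: the inequality
\[
C(1+|x|)^{-\min\{s'-\alpha,\,N-\alpha\}}\le C(1+|x|)^{-(N-\alpha)}
\]
requires $\min\{s'-\alpha,\,N-\alpha\}\ge N-\alpha$, i.e.\ $s'\ge N$ (and in fact $s'>N$ to avoid a further logarithm from the $s=N$ branch of Lemma \ref{A1}). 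If one only knew $N-\alpha<s'<N$, the first estimate of Lemma \ref{A1} would give decay $|x|^{-(s'-\alpha)}$ with $s'-\alpha<N-\alpha$, and the inequality goes the wrong way. Fortunately \eqref{cdd1} does deliver $s'>N$ strictly in every subcase --- this is precisely the verification the paper carries out --- so your argument is repaired by redoing that one line of arithmetic; but as written the justification of the final estimate is incorrect, and with only $s'>N-\alpha$ the step would genuinely fail.
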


\begin{proof} Let $0<\varepsilon<q(N-\alpha)-\beta$ and take $u(x)=(1+|x|)^{-(N-\alpha)}$. Since $p(N-\alpha)>\beta$ we can apply the estimates in Lemma \ref{A1} to deduce
$$
(I_\beta\ast u^p)(x)\leq  c_1
\begin{cases}
(1+|x|)^{\beta-p(N-\alpha)}&\quad\mbox{ if }p(N-\alpha)<N\\
(1+|x|)^{\beta-N}&\quad\mbox{ if }p(N-\alpha)>N\\
(1+|x|)^{\beta-N}\log  (|x|+e) &\quad\mbox{ if }p(N-\alpha)=N\\
\end{cases}
\quad\mbox{ in }\R^N,
$$
for some constant $c_1>0$. Thus,
$$
\Big[(I_\beta\ast u^p)u^q\Big](x)\leq  c_1
\begin{cases}
(1+|x|)^{\beta-(p+q)(N-\alpha)}&\quad\mbox{ if }p(N-\alpha)<N\\
(1+|x|)^{\beta-N-q(N-\alpha)}&\quad\mbox{ if }p(N-\alpha)>N\\
(1+|x|)^{\beta-N-q(N-\alpha)}\log  (|x|+e) &\quad\mbox{ if }p(N-\alpha)=N\\
\end{cases}
\quad\mbox{ in }\R^N.
$$
In particular, one may further estimate as
$$
\Big[(I_\beta\ast u^p)u^q\Big](x)\leq  c_1
\begin{cases}
(1+|x|)^{\beta-(p+q)(N-\alpha)}&\quad\mbox{ if }p(N-\alpha)<N\\
(1+|x|)^{\beta-N-q(N-\alpha)+\varepsilon}&\quad\mbox{ if }p(N-\alpha)\geq N\\
\end{cases}
\quad\mbox{ in }\R^N.
$$
Since $(p+q)(N-\alpha)-\beta>N$ and $N-\beta+q(N-\alpha)-\varepsilon>N$ it follows from the third estimate in Lemma \ref{A1} that
$$
I_{\alpha}\ast \Big[(I_\beta\ast u^p)u^q\Big](x)\leq  c_2
(1+|x|)^{\alpha-N} =c_2u(x) \quad\mbox{ in }\R^N,
$$
where $c_2>0$ is a constant.
Thus, the continuous function $U(x)=c_2^{-1/(p+q-1)}u(x)$ is a solution of \eqref{eqC} which satisfies \eqref{ulim}.
\end{proof}

\begin{proposition}\label{sharp-B2}
	Assume that
	\begin{equation*}
		1-\frac{N - \alpha - \beta }{N} p <  q < \frac{\beta}{N - \alpha} < 1.
	\end{equation*}
Then, \eqref{eqC} admits a positive radial solution \(u \in C(\R^N)\)
	which satisfies
	\[
	\limsup_{|x| \to \infty} u (x) |x|^{\frac{N - \alpha-\beta}{1-q}} < \infty.
	\]
\end{proposition}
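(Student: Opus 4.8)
The plan is to mimic the construction in Proposition~\ref{sharp-B1}, but with a test function that decays at the faster, non-natural rate suggested by the lower bound \eqref{qlowint}. Set $s := \frac{N-\alpha-\beta}{1-q}$ and take $u(x) = (1+|x|)^{-s}$. One should first check that in the stated parameter range $s$ is an admissible exponent for the iteration: from $q < \frac{\beta}{N-\alpha}<1$ one has $\alpha+\beta<N$, hence $s>0$, and from $q > 1 - \frac{N-\alpha-\beta}{N}p$ one gets $ps > N$ (this is precisely the rearrangement $p\,\frac{N-\alpha-\beta}{1-q} > N \Leftrightarrow p(N-\alpha-\beta) > N(1-q) \Leftrightarrow q > 1-\frac{N-\alpha-\beta}{N}p$). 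I would record this computation at the start.

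The main body is a two-step Riesz-kernel iteration using Lemmas~\ref{A1} and~\ref{A2}. Since $ps>N$, the third alternative in Lemma~\ref{A1} applied to $v=u^p$ gives $I_\beta*u^p(x) \le c_1(1+|x|)^{\beta-N}$. Multiplying by $u^q=(1+|x|)^{-sq}$ yields
\[
\big[(I_\beta*u^p)u^q\big](x) \le c_1(1+|x|)^{\beta-N-sq}.
\]
Now the key arithmetic: one computes $N-\beta+sq$ and compares it to $N$. Using $s(1-q)=N-\alpha-\beta$, we have $sq = s - (N-\alpha-\beta)$, so $\beta - N - sq = \beta - N - s + N-\alpha-\beta = -\alpha - s$, i.e. the exponent is exactly $-(s+\alpha)$, and since $s>0$ we have $s+\alpha > \alpha$, but more importantly we need to know whether $s+\alpha$ is $<N$, $=N$, or $>N$ to pick the right branch of Lemma~\ref{A1} for the outer potential $I_\alpha$. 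I expect that $s+\alpha$ need not exceed $N$, so the argument must split into the cases $s+\alpha<N$, $s+\alpha=N$, $s+\alpha>N$; in the first case Lemma~\ref{A1} gives $I_\alpha*[(I_\beta*u^p)u^q](x)\le c_2(1+|x|)^{\alpha-(s+\alpha)}=c_2(1+|x|)^{-s}=c_2 u(x)$, which is exactly what we want, while in the borderline and super-$N$ cases one picks up a logarithm or caps at $(1+|x|)^{\alpha-N}$, which is $\le c_2 u(x)$ as well since $s\le N-\alpha$ follows from $q\ge 0$... actually one should double-check $s \le N-\alpha$: this is $\frac{N-\alpha-\beta}{1-q}\le N-\alpha \Leftrightarrow N-\alpha-\beta \le (N-\alpha)(1-q) \Leftrightarrow (N-\alpha)q \le \beta$, which holds by hypothesis $q<\frac{\beta}{N-\alpha}$. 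So in every case $I_\alpha*[(I_\beta*u^p)u^q](x) \le c_2 u(x)$ (absorbing any log into a slightly worse exponent still $\le -s$, or noting $\alpha-N \le -s$). Then $U := c_2^{-1/(p+q-1)} u$ solves \eqref{eqC}; note $p+q-1>0$ here since $p+q > 1-\frac{N-\alpha-\beta}{N}p + q$... more directly, $q > 1 - \frac{N-\alpha-\beta}{N}p$ and $\frac{N-\alpha-\beta}{N}p < p$ give $q > 1-p$, so $p+q>1$ and the normalisation is legitimate. One also checks \eqref{cd} holds so that \eqref{eqCdom} is satisfied — this is automatic from the explicit polynomial decay just established.

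The step I expect to be the main obstacle is the bookkeeping in the case analysis for $s+\alpha$ versus $N$, together with making sure that the logarithmic correction from the borderline case $s+\alpha=N$ (or from $ps=N$, which is excluded here by the strict inequality, but one should confirm strictness is used) does not spoil the bound $I_\alpha*[\cdots]\le c_2 u$; the resolution is that $s+\alpha=N$ gives a bound $c_2(1+|x|)^{\alpha-N}\log(|x|+e)$ and since $\alpha-N = -s$ exactly in that case, one needs instead to restart from a very slightly faster test function $u(x)=(1+|x|)^{-s-\varepsilon}$ for small $\varepsilon>0$ (still satisfying $p(s+\varepsilon)>N$ and $(s+\varepsilon)+\alpha>N$), exactly as in the $p(N-\alpha)=N$ sub-case of Proposition~\ref{sharp-B1}, so that the outer potential lands in the clean super-$N$ branch of Lemma~\ref{A1}; this yields $\limsup_{|x|\to\infty} u(x)|x|^{s+\varepsilon}<\infty$, hence a fortiori $\limsup u(x)|x|^{s}<\infty$, which is the claimed decay. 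The same $\varepsilon$-trick handles the sub-case $s+\alpha>N$ if one prefers a uniform treatment.
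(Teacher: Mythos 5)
Your proposal is correct and follows essentially the same route as the paper: the test function $(1+|x|)^{-s}$ with $s=\frac{N-\alpha-\beta}{1-q}$, the observation that $q>1-\frac{N-\alpha-\beta}{N}p$ gives $ps>N$ and hence $I_\beta*u^p\le c_1(1+|x|)^{\beta-N}$, and then the first branch of Lemma~\ref{A1} for the outer potential. The case analysis you anticipate for $s+\alpha$ versus $N$ is vacuous: as you in fact verify, $q<\frac{\beta}{N-\alpha}$ is exactly equivalent to $s+\alpha<N$ (and $\frac{\beta}{N-\alpha}<1$, i.e.\ $\alpha+\beta<N$, gives $s+\alpha>\alpha$), so only the clean sub-$N$ branch of Lemma~\ref{A1} ever occurs and no $\varepsilon$-perturbation or logarithmic correction is needed.
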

\begin{proof}
Let $u(x)=(1+|x|)^{-k}$ where $k=(N-\alpha-\beta)/(1-q)$.
Since $1-(N-\alpha-\beta)p/N<q$, we have $pk>N$, and hence by the third estimate of Lemma \ref{A1} we have
$$
I_{\beta}*u^p\le c_1(1+|x|)^{\beta-N} \quad\mbox{ in }\R^N,
$$
for some constant $c_1>0$.
Since $\beta-N-kq=-\frac{N-\beta-\alpha q}{1-q}$, we have
$$
(I_{\beta}*u^p)u^q\le c_2(1+|x|)^{-\frac{N-\beta-\alpha q}{1-q}} \quad\mbox{ in }\R^N.
$$
Since $q<\frac{\beta}{N-\alpha}<1$, we have $\alpha<\frac{N-\beta-\alpha q}{1-q}<N$.
Hence, by the first estimate of Lemma \ref{A1} we have
$$
 I_{\alpha}*\Big[ (I_{\beta}*u^p)u^q\Big]\le c_2(1+|x|)^{-k} \quad\mbox{ in }\R^N,
$$
where $c_2>0$ is a constant. Thus, $U(x)=c_2^{-1/(p+q-1)}(1+|x|)^{-k}$ is a continuous solution of \eqref{eqC}.
Moreover,
$$
\limsup_{|x|\to\infty}U(x)|x|^k<\infty.
$$
\end{proof}

\begin{proposition}\label{sharp-B1plus}
	Assume that
	$$\alpha+\beta=N,\quad p>\frac{N}{N-\alpha}\quad\text{and}\quad q=1.$$
	Then, for every \(m > 0\) inequality \eqref{eqC} admits a positive radial solution
	\(u \in C(\R^N)\)	which satisfies
	\[
	\limsup_{|x| \to \infty} u (x) |x|^{N - \alpha-m} < \infty.
	\]
\end{proposition}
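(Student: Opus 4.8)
The plan is to exhibit an explicit power–type supersolution, exactly as in Propositions \ref{sharp-B1} and \ref{sharp-B2}. First observe that the conclusion gets weaker as $m$ grows: if $\limsup_{|x|\to\infty}u(x)|x|^{N-\alpha-m_0}<\infty$ for some $m_0>0$, then the same bound holds with any $m\ge m_0$, since $u(x)|x|^{N-\alpha-m}=u(x)|x|^{N-\alpha-m_0}\,|x|^{m_0-m}$ and $|x|^{m_0-m}$ is bounded for $|x|\ge1$. Hence it suffices to prove the statement for all sufficiently small $m>0$. Because $p>\frac{N}{N-\alpha}$, I may fix $m\in(0,N-\alpha)$ small enough that $(N-\alpha-m)p>N$, set $s:=N-\alpha-m\in(0,N-\alpha)$, and take as candidate the continuous, positive, radial function $u(x):=(1+|x|)^{-s}$.

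Next I would run the Riesz–kernel chain, using repeatedly that $\alpha+\beta=N$, so $N-\beta=\alpha$ and $N-\alpha=\beta$, together with Lemma \ref{A1}. Since $u^p=(1+|x|)^{-sp}$ with $sp>N$, the third estimate of Lemma \ref{A1} (applied with $\gamma=\beta$) gives
\[
I_\beta\ast u^p\le c_1(1+|x|)^{\beta-N}=c_1(1+|x|)^{-\alpha}\quad\text{in }\R^N .
\]
As $q=1$, multiplying by $u^q=u$ yields
\[
(I_\beta\ast u^p)\,u^q\le c_1(1+|x|)^{-(\alpha+s)}\quad\text{in }\R^N ,
\]
and in particular $(I_\beta\ast u^p)u^q\in L^1\big((1+|x|)^{-(N-\alpha)}dx,\R^N\big)$ because $s>0$, so that \eqref{eqCdom} holds for this $u$. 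Since $0<s<N-\alpha$ we have $\alpha<\alpha+s<N$, so the first estimate of Lemma \ref{A1} applies with $\gamma=\alpha$ and decay exponent $\alpha+s$, and produces
\[
I_\alpha\ast\big((I_\beta\ast u^p)u^q\big)\le c_2(1+|x|)^{-(\alpha+s-\alpha)}=c_2(1+|x|)^{-s}=c_2u\quad\text{in }\R^N
\]
for some $c_2>0$.

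Finally I would close the argument by scaling. Since $q=1$ we have $p+q-1=p$, so with $U:=c_2^{-1/p}u$ one computes
\[
I_\alpha\ast\big((I_\beta\ast U^p)U\big)=c_2^{-(p+1)/p}\,I_\alpha\ast\big((I_\beta\ast u^p)u\big)\le c_2^{-(p+1)/p}c_2\,u=c_2^{-1/p}u=U ,
\]
so $U$ is a positive radial solution of \eqref{eqC} in $C(\R^N)$, and $\limsup_{|x|\to\infty}U(x)|x|^{N-\alpha-m}=\limsup_{|x|\to\infty}c_2^{-1/p}(1+|x|)^{-s}|x|^{s}=c_2^{-1/p}<\infty$, as required.

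The computation is routine; the only genuine point to watch is the exponent bookkeeping in Lemma \ref{A1}. The inequality $sp>N$ — which forces the clean decay $(1+|x|)^{\beta-N}$ for $I_\beta\ast u^p$ rather than a slower or log–corrected rate — is precisely where the hypothesis $p>\frac{N}{N-\alpha}$ is used, while the strict inequality $\alpha+s<N$, i.e. $m>0$, is what keeps $I_\alpha\ast\big((I_\beta\ast u^p)u\big)$ of the exact order $(1+|x|)^{-s}$. At $m=0$ one would have $\alpha+s=N$ and the borderline case of Lemma \ref{A1} contributes a logarithmic factor, which explains why a (possibly arbitrarily small) polynomial correction cannot be dispensed with in this regime.
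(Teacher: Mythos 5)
Your proof is correct and follows essentially the same route as the paper: both take the power supersolution $(1+|x|)^{-k}$ with $k$ slightly below $N-\alpha$ chosen so that $kp>N$, and run the same two applications of Lemma \ref{A1}. The only difference is bookkeeping --- you reduce to small $m$ at the outset, whereas the paper encodes the same reduction in its case-defined $\delta$.
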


\begin{proof}
Let $m>0$.
Since $p>\frac{N}{N-\alpha}=\frac{N}{\beta}$, we see that $\beta-\frac{N}{p}>0$. Set
$$
\delta=
\begin{cases}
m & \mbox{ if }\, 0<m<\beta-\frac{N}{p},\\
\frac{1}{2}(\beta-\frac{N}{p}) & \mbox{ if }\, m\ge \beta-\frac{N}{p},
\end{cases}
\quad\mbox{ and }\quad k=\beta-\delta.
$$
Then,
$$
kp=
\begin{cases}
(\beta-m)p>N & \mbox{ if }\, 0<m<\beta-\frac{N}{p},\\
\frac{\beta p}{2}+\frac{N}{2}>N & \mbox{ if }\, m\ge \beta-\frac{N}{p}.
\end{cases}
$$
Let $u(x)=(1+|x|)^{-k}$.
By the third estimate of Lemma \ref{A1} we see that $I_{\beta}*u^p\le c_1(1+|x|)^{\beta-N}$ in $\R^N$ for some constant $c_1>0$, and hence
$$
(I_{\beta}*u^p)u^q\le c_1(1+|x|)^{\beta-N-kq} \quad\mbox{ in }\R^N.
$$
Since  $\alpha<-\beta+N+kq<N$, by the first estimate of Lemma \ref{A1} we see that
$$
I_{\alpha}*\Big[ (I_{\beta}*u^p)u^q\Big] \le c_2(1+|x|)^{\alpha+\beta-N-kq} \quad\mbox{ in }\R^N,
$$
for some constant $c_2>0$.
Since $\alpha+\beta-N-kq=-k$, we have
$$
I_{\alpha}*\Big[(I_{\beta}*u^p)u^q\Big]\le c_2u \quad\mbox{ in }\R^N.
$$
Thus, $U(x)=c_2^{-1/(p+q-1)}(1+|x|)^{-k}$ is a continuous solution of \eqref{eqC}.
Moreover,
$$
\limsup_{|x|\to\infty}U(x)|x|^{N-\alpha-m}\le\limsup_{|x|\to\infty}U(x)|x|^k<\infty.
$$

\end{proof}

\begin{proposition}\label{sharp-B1plusplus}
	Assume that
	\[
	p>\frac{N}{N-\alpha}\quad\text{and}\quad q = \frac{\beta}{N - \alpha}<1.
	\]
	Then, for $m\ge\frac{N - \alpha}{N -\alpha - \beta}$ inequality \eqref{eqC} admits a positive radial solution
	\(u \in C(\R^N)\),
	which satisfies
	\[
	\limsup_{|x| \to \infty} u (x) |x|^{N - \alpha} \bigl(\log |x|\bigr)^{-m} < \infty.
	\]
\end{proposition}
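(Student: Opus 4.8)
The plan is to test \eqref{eqC} with the explicit continuous radial ansatz
$$
u(x) = (1+|x|)^{-(N-\alpha)}\bigl(\log(|x|+e)\bigr)^m ,
$$
which already satisfies $\limsup_{|x|\to\infty}u(x)|x|^{N-\alpha}(\log|x|)^{-m}<\infty$; so it only remains to prove a pointwise inequality $I_\alpha\ast\big((I_\beta\ast u^p)u^q\big)\le c\,u$ on $\R^N$ for some $c>0$, after which the rescaling $U=c^{-1/(p+q-1)}u$ (legitimate since $p+q>\frac{N+\beta}{N-\alpha}>1$, so $p+q-1>0$) yields a solution of \eqref{eqC} with the required decay. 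The domain condition \eqref{eqCdom} will be automatic once the right-hand side is shown to be finite.

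First I would estimate $I_\beta\ast u^p$. Since $p>\frac{N}{N-\alpha}$ we have $p(N-\alpha)>N$, so picking any $s\in(N,p(N-\alpha))$ gives $\limsup_{|x|\to\infty}u^p(x)|x|^s=0$ (the logarithmic factor is harmless), and the $s>N$ case of Lemma~\ref{A1} gives $I_\beta\ast u^p\le c_1(1+|x|)^{\beta-N}$ on $\R^N$. Multiplying by $u^q$ and using the critical relation $q(N-\alpha)=\beta$,
$$
(I_\beta\ast u^p)u^q\le c_1(1+|x|)^{\beta-N-q(N-\alpha)}\bigl(\log(|x|+e)\bigr)^{mq}=c_1(1+|x|)^{-N}\bigl(\log(|x|+e)\bigr)^{mq}\quad\text{in }\R^N .
$$
The crucial step is the final convolution with $I_\alpha$: the intermediate function decays at the borderline rate $|x|^{-N}$ (times a log power), so Lemma~\ref{A1} no longer applies and I would instead invoke Lemma~\ref{A2} with $\sigma=mq>-1$, whose third case gives $I_\alpha\ast\big((I_\beta\ast u^p)u^q\big)\le c_2(1+|x|)^{-(N-\alpha)}\bigl(\log(|x|+e)\bigr)^{mq+1}$. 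Now the hypothesis $m\ge\frac{N-\alpha}{N-\alpha-\beta}$ is precisely $m(1-q)\ge1$, i.e. $mq+1\le m$; since $\log(|x|+e)\ge1$ everywhere, this forces $(\log(|x|+e))^{mq+1}\le(\log(|x|+e))^m$, hence $I_\alpha\ast\big((I_\beta\ast u^p)u^q\big)\le c_2 u$ on $\R^N$, as needed. Finiteness of the right-hand side, hence \eqref{eqCdom}, follows since $(I_\beta\ast u^p)u^q\le c_1(1+|x|)^{-N}(\log(|x|+e))^{mq}\in L^1((1+|x|)^{-(N-\alpha)}dx)$.

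The routine bookkeeping is that each "$\le$" above must hold pointwise on all of $\R^N$, not just near infinity: all functions involved are continuous, so the limsup bounds plus boundedness on compact sets suffice, exactly as in Propositions~\ref{sharp-B1}--\ref{sharp-B1plus}. The genuinely new point — and the only real obstacle — is the borderline behaviour at decay $|x|^{-N}$, where $I_\alpha$ increments the logarithmic exponent by exactly $1$; this is what produces, and makes natural, the threshold $m\ge\frac{1}{1-q}=\frac{N-\alpha}{N-\alpha-\beta}$.
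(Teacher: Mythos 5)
Your proposal is correct and follows essentially the same route as the paper's own proof: the same ansatz $u(x)=(1+|x|)^{-(N-\alpha)}(\log(e+|x|))^m$, the third case of Lemma~\ref{A1} for $I_\beta\ast u^p$, the third case of Lemma~\ref{A2} with $\sigma=mq$ for the outer convolution, and the observation that $m\ge\frac{N-\alpha}{N-\alpha-\beta}$ is exactly $mq+1\le m$. Your extra remarks (choosing $s\in(N,p(N-\alpha))$ to absorb the logarithm, and $p+q-1>0$ for the rescaling) are correct refinements of details the paper leaves implicit.
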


\begin{proof}
	Take $$u(x)=(1+|x|)^{-(N-\alpha)}(\log(e+|x|))^m.$$
	Since $-(N-\alpha)p<-N$, by the third estimate of Lemma \ref{A1} we see that $I_{\beta}*u^p\le c_1(1+|x|)^{\beta-N}$ for some constant $c_1>0$, and hence
	$$
	(I_{\beta}*u^p)u^q\le c_1(1+|x|)^{\beta-N-(N-\alpha)q}(\log(e+|x|))^{mq}=c_1(1+|x|)^{-N}(\log(e+|x|))^{mq}.
	$$
 By the third estimate of Lemma \ref{A2} we see that
	$$
	I_{\alpha}*\Big[ (I_{\beta}*u^p)u^q\Big] \le c_2(1+|x|)^{-(N-\alpha)}(\log(e+|x|))^{mq+1}\quad\mbox{ in }\R^N,
	$$
	for some constant $c_2>0$.
	Since $m\ge \frac{N - \alpha}{N -\alpha - \beta}$, we have
	$$
	I_{\alpha}*\Big[(I_{\beta}*u^p)u^q\Big]\le c_2u.
	$$
	Thus, $U(x)=c_2^{-1/(p+q-1)}u$ is a continuous solution of \eqref{eqC}.
\end{proof}

\newpage

\end{document}